\newcommand{\C}{\mathbb C}
\newcommand{\R}{\mathbb R}
\newcommand{\N}{\mathbb N}
\newcommand{\K}{\mathbb K}
\newcommand{\cA}{\mathcal A}
\newcommand{\cB}{\mathcal B}
\newcommand{\cC}{\mathcal C}
\newcommand{\cD}{\mathcal D}
\newcommand{\cK}{\mathcal K}
\newcommand{\cL}{\mathcal L}
\newcommand{\cQ}{\mathcal Q}
\newcommand{\cM}{\mathcal M}
\newcommand{\cN}{\mathcal N}
\newcommand{\cS}{\mathcal S}
\newcommand{\cV}{\mathcal V}
\newcommand{\hk}{\hookrightarrow}
\newcommand{\HS}{\mathcal{HS}(\ell_2)}
\newcommand{\ldss}{\mathcal{L}(s',s)}
\newcommand{\ls}{\mathcal{L}(s)}
\newcommand{\lds}{\mathcal{L}(s')}
\newcommand{\lstars}{\mathcal{L}^*(s)}
\newcommand{\lslds}{\mathcal{L}(s)\cap\mathcal{L}(s')}
\renewcommand{\c}{{\colon}}
\newcommand{\op}{{\operatorname{ind}}}
\newcommand{\opp}{{\operatorname{proj}}}
\newcommand{\bproof}{{\raggedright\textbf{Proof.}} \ }
\newcommand{\bqed}{\hspace*{\fill} $\Box $\medskip}
\let\epsilon\varepsilon
\let\phi\varphi
\theoremstyle{plain}
\newtheorem{Prop}{Proposition}[section]
\newtheorem{Cor}[Prop]{Corollary}
\newtheorem{Lem}[Prop]{Lemma}
\newtheorem{Th}[Prop]{Theorem}
\theoremstyle{definition}
\newtheorem{Def}[Prop]{Definition}
\newtheoremstyle{mytheoremstyle} 
    {\topsep}                    
    {\topsep}                    
    {}                   
    {}                           
    {\bfseries}                   
    {.}                          
    {.5em}                       
    {}  
\theoremstyle{mytheoremstyle}
\newtheorem{Rem}[Prop]{Remark}
\title{{\sc The multiplier algebra of the noncommutative Schwartz space}}
\author{{\sc Tomasz Cia{\'s} and Krzysztof Piszczek}}
\date{}
\begin{document}

\begin{abstract}
We describe the multiplier algebra of the noncommutative Schwartz space. 
This multiplier algebra can be seen as the largest ${}^*$-algebra of unbounded operators on a separable Hilbert space with the classical Schwartz space of rapidly decreasing functions as the domain.
We show in particular that it is neither a $\cQ$-algebra nor $m$-convex. On the other hand, we prove that classical tools of functional analysis, for example, the closed graph theorem, the open mapping theorem or the uniform boundedness principle, are still available.
\end{abstract}

\footnotetext[1]{{\bf Keywords:} (Fr\'echet) $m$-convex algebra, (noncommutative) Schwartz space, multiplier algebra, $\mathrm{PLS}$-space.}
\footnotetext[2]{{\bf 2010 Mathematics Subject Classification:}
Primary: 47L10, 46K10, 46H15. Secondary: 46A13, 46A11.} 
\footnotetext[3]{{\em
Acknowledgement.} The research of both authors has been supported by the National Center of Science, Poland,
grant no. DEC-2013/10/A/ST1/00091.}

\maketitle

\section{Introduction}

The aim of this paper is to study algebraic and topological properties of some specific topological algebra with involution, called the \emph{multiplier algebra of the noncommutative Schwartz space} and denoted by $\cM\cS$. In particular we will show that, in spite of the fact that $\cM\cS$ is neither Banach nor metrizable, the Closed Graph Theorem, Open Mapping Theorem and Uniform Boundedness Principle work on this space -- see Th. \ref{classical-fa}. Since it is a locally convex space the Hahn-Banach Theorem obviously holds.

The algebra $\cM\cS$ can be described in many ways.
For instance, it can be seen as the
``intersection'' $\cL(\cS(\R))\cap\cL(\cS'(\R))$, where $\cL(\cS(\R))$ (resp., $\cL(\cS'(\R))$) is the algebra of continuous linear operators on the well-known Schwartz space
$\cS(\R)$ of smooth rapidly decreasing functions (resp., on the space $\cS'(\R)$ of tempered distributions).
It appears that we can replace the above function and distribution spaces with the corresponding sequence spaces.
This means that $\cM\cS$ is isomorphic (as a topological ${}^*$-algebra) to the 
``intersection'' $\lslds$, where $s$ is the space of rapidly decreasing sequences, $s'$ is the space of slowly increasing sequences
(for definitions see next section) and $\ls$, $\lds$ are the corresponding spaces of continuous linear operators.
The 
``intersection'' -- whatever it is -- allows us to introduce a natural involution and this is why we consider $\lslds$ instead of, for example, $\ls$.
The algebra $\cM\cS$ also turns out to be the maximal $\mathcal{O}^*$-algebra with domain $s$, and thus it can be viewed as the largest algebra
of unbounded operators on $\ell_2$ with domain $s$ -- see the book of K. Schm\"udgen \cite{Sch} which deals with $\mathcal{O}^*$-algebras, and especially \cite[Part I.2]{Sch}.
Moreover, by Proposition \ref{prop_lslds=LambdaA}, our algebra is isomorphic (as a topological ${}^*$-algebra) to the matrix algebra
\[\Lambda(\cA):=\bigg\{x=(x_{i,j})_{i,j\in\N}\in\C^{\N^2}:\,\,\forall\,N\in\N_0\,\,\exists\,n\in\N_0\c\,\,\sup_{i,j\in\N}|x_{i,j}|\max\bigg\{\frac{i^N}{j^n},\frac{j^N}{i^n}\bigg\}<\infty\bigg\}.\]

As the name suggests, $\cM\cS$ can be also viewed as the algebra of 
``multipliers'' for the so-called \emph{noncommutative Schwartz space} $\cS$, also known as the \emph{algebra of smooth operators}.
The noncommutative Schwartz space is a specific $m$-convex Fr\'echet $^*$-algebra isomorphic to several, operator algebras naturally appearing in analysis,
for example, to the algebra $\cL(\cS'(\R),\cS(\R))$ of continuous linear operators from $\cS'(\R)$ to $\cS(\R)$, the algebra $\ldss$ or to the algebra
\[\cK_\infty:=\{(x_{i,j})_{i,j\in\N}\in\C^{\N^2}:\,\,\forall\,N\in\N_0\,\,\sup_{i,j\in\N}|x_{i,j}|i^Nj^N<\infty\}\]
of \emph{rapidly decreasing matrices} (see \cite[Th. 1.1]{PD} for more representations).
It is also isomorphic -- topologically but not as an algebra -- to the Fr\'echet space $\cS(\R)$ (which explains its name).

The noncommutative Schwartz space and the space $\cS(\R)$ itself play a role in a number of fields, for example: structure theory of Fr\'echet spaces and splitting of short exact sequences (see \cite[Part IV]{MV});
K-theory (see \cite{JC}); $C^*$-dynamical systems (see \cite{ENN}); cyclic cohomology for crossed products (see \cite{LS});  operator analogues for locally convex spaces (see \cite{EW,EW1}) and
quantum mechanics, where it is called the \textit{space of physical states} and its dual is the so-called \textit{space of observables} (see \cite{DH}). Recently, some progress in the investigation
of the noncommutative Schwartz space has been made. This contains: functional calculus (see \cite{TC}); description of closed, commutative, $^*$-subalgebras (see \cite{TC1});
automatic continuity (see \cite{KP}); amenability properties (see \cite{AP,KP}); Grothendieck inequality (see \cite{KP4}).

However, the significance of the algbra $\cM\cS$ lies not only in the fact that it is a muliplier algebra of some well-known algebra of operators but also in its resemblance to the $C^*$-algebra
$\cB(\ell_2)$ of bounded operators on $\ell_2$ in the context of some class of Fr\'echet and topological ${}^*$-algebras. This can be already seen in the very definition of the maximal $\mathcal{O}^*$-algebra
on the domain $s$ (which is, recall, isomorphic to $\cM\cS$). It is also worth pointing out that $\cS$ is considered as a Fr\'echet analogon of the algebra $\cK(\ell_2)$ of compact operators on $\ell_2$
and $\cB(\ell_2)$ is the multiplier
algebra of $\cK(\ell_2)$ whereas, as we have already noted, $\cM\cS$ is the multiplier algebra of $\cS$ -- in fact, in order to prove this, we apply methods used in the case of $\cK(\ell_2)$ and $\cB(\ell_2)$
(compare Theorem \ref{th_Ls=DC} with \cite[Prop. 2.5]{Busby} and \cite[Example 3.1.2]{Mur}). Moreover, it seems that $\cM\cS$ contains as closed ${}^*$-subalgebras many important topological ${}^*$-algebras,
e.g. Fr\'echet algebras $C^{\infty}(M)$ of smooth functions on each compact smooth manifold $M$ (the proof of this fact will be presented in a forthcoming paper).

Finally, we find quite interesting to compare some commutative sequence ${}^*$-algebras with the corresponding noncommutative operator ${}^*$-algebras. This is done by the
following diagram with the horizontal continuous embeddings of algebras:
\begin{displaymath}
\xymatrix{
s \ar @{^{(}->}[r] \ar @{<~>}[d] & \ell_1 \ar@{^{(}->}[r]\ar @{<~>}[d] & \ell_2 \ar@{^{(}->}[r]\ar@{<~>}[d] & c_0 \ar@{^{(}->}[r]\ar @{<~>}[d] & \ell_\infty \ar@{^{(}->}[r]\ar @{<~>}[d] & s' \ar @{<~>}[d] \\
\cS \ar @{^{(}->}[r]              & \cN(\ell_2) \ar@{^{(}->}[r]            & \HS    \ar@{^{(}->}[r]       & \cK(\ell_2) \ar@{^{(}->}[r] & \cB(\ell_2) & \cM\cS,
}
\end{displaymath}
where $\cN(\ell_2)$ and $\HS$ is the algebra of nuclear and Hilbert-Schmidt operators, respectively.
The ``vertical correspondences'' from $s$ up to $c_0$ mean, for example, that every monotonical sequence of nonnegative numbers belonging to a commutative algebra from the first row is a sequence
of singular numbers of some operator of its noncommutative analogues.
Moreover, algebras from the first row are embedded in a canonical way, as the algebras of diagonal operators, into the corresponding algebras from the second row. It will be made clear below that $\cB(\ell_2)$ is not embedded in $\cM\cS$ and vice versa.

%

This article is divided into four parts. Section 2 recalls basic notation and properties of the objects involved. In Section 3, we describe the multiplier algebra of the noncommutative Schwartz space.
Section 4 deals with its topological properties and, finally, in Section 5 we consider properties of $\cM\cS$ as a topological algebra.
(For more information on functional analysis, see \cite{J,MV}, for more on Banach algebra theory, see \cite{D}; and for non-Banach operator algebras, see \cite{Sch}.)

\section{Notation and Preliminaries}

In what follows, we set

\begin{align}
\N:=\{1,2,3,\ldots\}, \nonumber \\
\N_0:=\{0,1,2,\ldots\}. \nonumber
\end{align}
For locally convex spaces $E$ and $F$, we denote by $\cL(E,F)$ the space of all continuous linear operators from $E$ to $F$, and we set $\cL(E):=\cL(E,E)$. These spaces will be considered with the topology $\tau_{\cL(E,F)}$ of uniform convergence on bounded sets.

By $s$ we denote the \emph{space of rapidly decreasing sequences}\index{space!of rapidly decreasing sequences}, that is, the Fr\'echet space
\[s:=\bigg\{\xi=(\xi_j)_{j\in\N}\in\C^\N\c|\xi|_n:=\bigg(\sum_{j=1}^\infty|\xi_j|^2j^{2n}\bigg)^{1/2}<\infty\text{ for all }n\in\N_0\bigg\}\]
with the topology given by the system $(|\cdot|_n)_{n\in\N_0}$ of norms. We will denote by $s_n$ the Hilbert space corresponding to the norm $|\cdot|_n$.

By \cite[Prop. 27.13]{MV} we may identify the strong dual of $s$, that is, the space of all continuous linear functionals on $s$ with the topology of uniform convergence on bounded
subsets of $s$, with the \emph{space of slowly increasing sequences}
\[s':=\bigg\{\xi=(\xi_j)_{j\in\N}\in\C^\N\c|\xi|_{-n}:=\bigg(\sum_{j=1}^\infty|\xi_j|^2j^{-2n}\bigg)^{1/2}<\infty\text{ for some }n\in\N_0\bigg\}\]
equipped with the inductive limit topology for the sequence $(s_{-n})_{n\in\N_0}$, where $s_{-n}$ is the Hilbert space corresponding to the norm $|\cdot|_{-n}$.
In other words, the locally convex topology on $s'$ is given by the family $\{|\cdot|'_{B}\}_{B\in\cB}$ of norms , $|\xi|'_{B}:=\sup_{\eta\in B}|\langle \eta,\xi\rangle|$,
where $\cB$ denotes the class of all bounded subsets of $s$.

The space $\cL(s',s)$ is a Fr\'echet space, whose topology is described by the sequence $(\|\cdot\|_n)_{n\in\N_0}$ of norms,
\[\|x\|_n:=\sup\{|x\xi|_n\c\,\,\xi\in U_n^{\circ}\}\label{norms},\]
where $U_n^{\circ}=\{\xi\in s'\c\,\,|\xi|_{-n}\leqslant1\}$ are polars of the zeroneighbourhood basis $(U_n)_{n\in\N_0}$ in $s$. One can show that $\cS$ is in fact isomorphic (as a Fr\'echet space) to the space $s$ (see \cite[\S41, 7.(5)]{K2} and \cite[Lemma 31.1]{MV}). Since $\cL(s',s)\hk\cB(\ell_2)$ (continuous embedding given by $\ell_2\hk s'\overset{x}{\to}s\hk\ell_2$), we can endow our space with multiplication, involution and order structure of the $C^*$-algebra $\cB(\ell_2)$. With these operations $\cL(s',s)$ becomes an $m$-convex Fr\'echet $^*$-algebra and is called the \textit{noncommutative Schwartz space} or the \textit{algebra of smooth operators}.

We devote a large part of the present article to considering the spaces $\ls$, $\lds$ and their ``intersection"
\[\lslds:=\{x\in\ls:x=\widetilde{x}\mid_s\textrm{ for some (and hence unique) }\widetilde{x}\in\lds\}\]
equipped with the topology $\tau_{\lslds}:=\tau_{\ls}\cap\tau_{\lds}$; $\tau_{\lslds}$ is therefore determined by the family $\{q_{n,B}\}_{n\in\N_0,B\in\cB}$ of seminorms, where

\begin{equation}
q_{n,B}(x):=\max\bigg\{\sup_{\xi\in B}|x\xi|_n,\sup_{\xi\in U_n^\circ}|x\xi|'_{B}\bigg\}
\label{qnB}
\end{equation}
and $\cB$ is the class of all bounded subsets of $s$. It is easy to show that $\ls$ and $\lds$ are topological algebras and, as we will show in Proposition \ref{prop_lstars=lslds_as_top_star_alg}, the same is true for $\lslds$.

The spaces $\ls$ and $\lds$ can be seen as the completed tensor products $s'\widehat{\otimes}s$ and $s\widehat{\otimes}s'$, respectively -- see \cite[\S43, 3.(7)]{K2}.
Since $s$ and $s'$ are nuclear (see \cite[Prop. 28.16]{MV}), the injective and the projective tensor product topologies coincide (see \cite[21.2, Th. 1]{J}).
Consequently (see \cite[15.4, Th. 2 \& 15.5, Cor. 4]{J}), $\ls$ and $\lds$ admit natural $\mathrm{PLS}$-topologies (see below) coinciding with $\tau_{\cL(s)}$ and $\tau_{\cL(s')}$, that is,

\begin{equation}
\begin{split}
\ls=\operatorname{proj}_{N\in\N_0}\operatorname{ind}_{n\in\N_0}\cL(s_n,s_N), \\
\lds=\operatorname{proj}_{N\in\N_0}\operatorname{ind}_{n\in\N_0}\cL(s'_N,s'_n).
\label{operator-representation}
\end{split}
\end{equation}

%

\begin{Rem}
To be precise, the above two representations of $\cL(s)$ and $\cL(s')$ are not of $\mathrm{PLS}$-type. But this can be easily overcome following the first part of Remark \ref{rem_PLS}.
\end{Rem}

As for the class of $\mathrm{PLS}$-spaces (the best reference for which is \cite{PD-PLS} and the references therein) we will need more information.
Recall that by a \textit{$\mathrm{PLB}$-space} we mean a locally convex space $X$ whose topology is given by
\[X=\opp_{N\in\N_0}\op_{n\in\N_0}X_{N,n},\]
where all the $X_{N,n}$'s are Banach spaces and all the linking maps $\iota_{N,n}^{N,n+1}\c X_{N,n}\hk X_{N,n+1}$ are linear and continuous inclusions.
If all the $X_{N,n}$'s are Hilbert spaces then we call $X$ a \textit{$\mathrm{PLH}$-space}. If the linking maps $(\iota_{N,n}^{N,n+1})_{N,n\in\N_0}$ are compact (nuclear)
then $X$ is called a \textit{$\mathrm{PLS}$-space} (\textit{$\mathrm{PLN}$-space}).

Of particular importance for us will be the so-called \textit{K\"{o}the-type $\mathrm{PLB}$-spaces}. Recall that a \textit{K\"{o}the $\mathrm{PLB}$-matrix}
is a matrix $\cC:=(c_{j,N,n})_{j\in\N,N,n\in\N_0}$ of nonnegative scalars satisfying:
\begin{enumerate}
\item[(i)]
$\forall\,j\in\N\,\,\exists\,N\in\N_0\,\,\forall\,n\in\N_0\c\,\,\,\,c_{j,N,n}>0$,
\item[(ii)]
$\forall\,j\in\N,N,n\in\N_0\c\,\,\,\,c_{j,N,n+1}\leqslant c_{j,N,n}\leqslant c_{j,N+1,n}$.
\end{enumerate}
We define
\[\Lambda^p(\cC):=\{x=(x_j)_j|\,\,\forall\,N\in\N_0\,\,\exists\,n\in\N_0\c\,\,||x||_{N,n,p}<+\infty\}\,\,\,\,\,\,\,\,(1\leqslant p<\infty),\]
where
\[||x||_{N,n,p}:=\Bigl(\sum_{j=1}^{\infty}(|x_j|c_{j,N,n})^p\Bigr)^{1/p}.\]
For $p=\infty$ we use the respective `sup' norms. Then $\Lambda^p(\cC)$ can be identified with the space $\opp_{N\in\N_0}\op_{n\in\N_0}\ell^p(c_{j,N,n})$ and it is called
a K\"{o}the-type $\mathrm{PLB}$-space. If for all $N,n\in\N_0$ we have $\lim\limits_j\frac{c_{j,N,n+1}}{c_{j,N,n}}=0$ (compact linking maps) then $\Lambda^p(\cC)$ is a $\mathrm{PLS}$-space and
if for all $N,n\in\N_0$ we have $\sum_j\frac{c_{j,N,n+1}}{c_{j,N,n}}<\infty$ (nuclear linking maps) then $\Lambda^p(\cC)$ is a $\mathrm{PLN}$-space.
If $\Lambda^p(\cC)=\Lambda^q(\cC)$, as sets, for all $1\leqslant p,q\leqslant\infty$ then we simply write $\Lambda(\cC)$.

For later use, we distinguish specific K\"{o}the $\mathrm{PLB}$-matrices on $\N\times\N$, defined as follows:

\begin{equation}
\begin{split}
\cB=(b_{ij;N,n})_{i,j\in\N,N,n\in\N_0},\,\,\,\,\,\,\,\,b_{ij;N,n}:=\frac{i^N}{j^n}, \\
\cB'=(b'_{ij;N,n})_{i,j\in\N,N,n\in\N_0},\,\,\,\,\,\,\,\,b'_{ij;N,n}:=\frac{j^N}{i^n}
\label{plb-matrices}
\end{split}
\end{equation}

and

\begin{equation}
\cA=(a_{ij;N,n})_{i,j\in\N,N,n\in\N_0},\,\,\,\,\,\,\,\,a_{ij;N,n}:=\max\{b_{ij;N,n},b'_{ij;N,n}\}=\max\bigg\{\frac{i^N}{j^n},\frac{j^N}{i^n}\bigg\}.
\label{plb-matrix}
\end{equation}

\begin{Prop}
\label{lp-equal}
Let $\cM$ be any of the K\"{o}the-type $\mathrm{PLB}$-matrices given by \eqref{plb-matrices} or \eqref{plb-matrix}. Then $\Lambda^p(\cM)=\Lambda^q(\cM)$ topologically, for all $1\leqslant p,q\leqslant\infty$.
\end{Prop}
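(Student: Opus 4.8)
The plan is to reduce the topological equality $\Lambda^p(\cM)=\Lambda^q(\cM)$ to a single uniform estimate on the weights, and then to transport it through the projective--inductive (PLB) limit structure. Fix $1\leq p<q\leq\infty$; the case $p>q$ is symmetric and $p=q$ is trivial. One inclusion comes for free: since for a fixed weight one has the nesting $\|\cdot\|_{\ell^q(c)}\leq\|\cdot\|_{\ell^p(c)}$, the inclusions $\ell^p(c_{\cdot;N,n})\hookrightarrow\ell^q(c_{\cdot;N,n})$ are continuous for every $N,n$, and passing to the limits shows that the identity $\Lambda^p(\cM)\to\Lambda^q(\cM)$ is continuous. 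The whole content is therefore the reverse continuity, i.e. that one can recover an $\ell^p$-bound from an $\ell^q$-bound at the cost of moving the parameters.

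The key step is the weight estimate: for each $\cM\in\{\cB,\cB',\cA\}$ and all $N,n\in\N_0$,
\[
\frac{c_{ij;N,n+2}}{c_{ij;N+2,n}}\leq\frac{1}{i^2j^2}\qquad(i,j\in\N).
\]
For $\cB$ and $\cB'$ this is an immediate cancellation, and for $\cA=\max\{b,b'\}$ it follows by checking each of the two terms of the numerator against the dominating term of the denominator. I expect this to be the crucial point, because it is exactly here that one must use the freedom in both parameters simultaneously: the ratio along $n$ alone (resp.\ $N$ alone) is $j^{-2}$ (resp.\ $i^{-2}$), and neither is summable over the full index set $\N\times\N$, whereas the product $i^{-2}j^{-2}$ is. A single Hölder step in one parameter is thus bound to fail, and the max-weight $\cA$ needs the small case analysis indicated above.

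Granting the estimate, I would run Hölder's inequality with exponents $q/p$ and $q/(q-p)$ (replacing the first factor by a supremum when $q=\infty$) to obtain, for $x\in\ell^q(c_{\cdot;N+2,n})$,
\[
\|x\|_{N,n+2,p}\leq C_{p,q}\,\|x\|_{N+2,n,q},\qquad C_{p,q}:=\Bigl(\sum_{i,j\in\N}(i^2j^2)^{-pq/(q-p)}\Bigr)^{(q-p)/(pq)}<\infty,
\]
the finiteness of $C_{p,q}$ being guaranteed by the previous display. This yields continuous inclusions $\ell^q(c_{\cdot;N+2,n})\hookrightarrow\ell^p(c_{\cdot;N,n+2})$ for all $N,n$.

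Finally I would assemble these inclusions into the PLB structure $\Lambda^\bullet(\cM)=\opp_N\op_n\ell^\bullet(c_{\cdot;N,n})$. Continuity of a map into a projective limit is tested against the projections, and a map out of an inductive limit is continuous as soon as it is continuous on each step; so the inclusions above, together with the monotonicity (ii) of the matrix (which identifies $\opp_N$ with the countable intersection $\bigcap_N$), show that for every $N$ the identity $\Lambda^q(\cM)\to\Lambda^p(\cM)$ factors continuously as $\Lambda^q(\cM)\to\op_n\ell^q(c_{\cdot;N+2,n})\to\op_n\ell^p(c_{\cdot;N,n})$, the second arrow being induced by $\ell^q(c_{\cdot;N+2,n})\hookrightarrow\ell^p(c_{\cdot;N,n+2})$. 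Combined with the trivial direction of the first paragraph, both identity maps are continuous, whence $\Lambda^p(\cM)=\Lambda^q(\cM)$ as topological vector spaces. All values of $p,q$, including the endpoints $1$ and $\infty$, are covered by the same computation, since $C_{p,q}$ stays finite throughout.
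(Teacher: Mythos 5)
Your proof is correct and rests on the same key point as the paper's: the weight identity $c_{ij;N+2,n}=(ij)^2\,c_{ij;N,n+2}$ (an equality, not just an inequality, for all three matrices $\cB,\cB',\cA$, since $(ij)^2$ factors out of the maximum), whose ratio $(ij)^{-2}$ is summable over $\N\times\N$ precisely because both parameters $N$ and $n$ are shifted at once. The only difference is cosmetic: the paper treats general $p,q$ by sandwiching $\|\cdot\|_{N,n,p}$ between $\|\cdot\|_{N,n,\infty}$ and $\|\cdot\|_{N,n,1}$ and proving just the two endpoint estimates $\|\cdot\|_{N,n,\infty}\leqslant\|\cdot\|_{N,n,1}$ and $\|\cdot\|_{N,n+2,1}\leqslant C\|\cdot\|_{N+2,n,\infty}$, whereas you run H\"older directly for each pair $(p,q)$; both routes produce the same shifted continuous inclusions and the same assembly through the PLB structure.
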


\bproof
Since $m_{ij;N+2,n}=(ij)^2m_{ij;N,n+2}$ for all indices $i,j\in\N,N,n\in\N_0$, we have the inequalities
\[\|\cdot\|_{N,n,\infty}\leqslant\|\cdot\|_{N,n,1}\,\,\,\,\text{and}\,\,\,\,\|\cdot\|_{N,n+2,1}\leqslant C\|\cdot\|_{N+2,n,\infty},\]
which hold for all $N,n\in\N_0$ with $C=\sum_{i,j=1}^{\infty}(ij)^{-2}=\frac{\pi^4}{36}$.
\bqed

Using the notion of K\"{o}the-type $\mathrm{PLB}$-spaces we get from \eqref{operator-representation} and Proposition \ref{lp-equal} topological isomorphisms:

\begin{equation}
\begin{split}
\cL(s)\cong\Lambda(\cB)\hspace{20pt}\text{and}\hspace{20pt}\cL(s')\cong\Lambda(\cB').
\label{matrix-representation}
\end{split}
\end{equation}
In both cases the isomorphism is given by $x\mapsto(\langle xe_j,e_i\rangle)_{i,j\in\N}$.

%
%
%
%
%

\section{Representations of the multiplier algebra}

In this section we want to describe the so-called multiplier algebra of $\cS$ which is, in some sense, the largest algebra of operators acting on $\cS$.
The algebra $\lslds$ seems to be a good candidate, because if $x\in\cS$ and $y\in\lslds$, then clearly $xy,yx\in\cS$. Now, using heuristic arguments, we may show that the algebra $\lslds$ is optimal. Assume that $y\in\cL(E,F)$ for some locally convex spaces $E,F$. If $xy\in\cS$ for every $x\in\cS$ then, in particular,
$(\langle\cdot,\xi\rangle\xi)y\in\cS$ for all $\xi\in s$, and therefore
$\langle y(\eta),\xi\rangle$ has to be well-defined for every $\xi\in s$ and $\eta\in s'$, which shows that $y\colon s'\to s'$.
Similarly, we show that if $yx\in\cS$ for every $x\in\cS$ then $y\colon s\to s$. Hence, $y\in\lslds$.

Another, more abstract, approach to multipliers goes through the so-called double centralizers
(see Definition \ref{def_double_centralizer}) and is due to Johnson \cite{Json}.
Theorem \ref{th_Ls=DC} below shows that this approach leads again to $\lslds$.
In Corollary \ref{cor_DC=lstars=lslds=LambdaA}, we show that the multiplier algebra of $\cS$ has other representations -- also important in further investigation.

The theory of double centralizers of $C^*$-algebras was developed by Busby (see \cite{Busby} and also \cite[pp. 38--39, 81--83]{Mur}).
Our exposition for the noncommutative Schwartz space will follow that of $C^*$-algebras.

\begin{Def}\label{def_double_centralizer}
Let $A$ be a ${}^*$-algebra over $\C$. A pair $(L,R)$ of maps from $A$ to $A$ (neither linearity nor continuity is required) such
that $xL(y)=R(x)y$ for $x,y\in A$ is called a \emph{double centralizer}\index{double centralizer} on $A$. We denote the set of all double centralizers on $A$ by $\mathcal{DC}(A)$.
Moreover, for a map $T\colon A\to A$, we define $T^*\colon A\to A$ by $T^*(x):=(T(x^*))^*$.

Now, let $(L_1,R_1),(L_2,R_2)\in\mathcal{DC}(A)$, $\lambda\in\C$. We define:
\begin{enumerate}[\upshape(i)]
 \item $(L_1,R_1)+(L_2,R_2):=(L_1+L_2,R_1+R_2)$;
 \item $\lambda(L_1,R_1):=(\lambda L_1,\lambda R_1)$;
 \item $(L_1,R_1)\cdot(L_2,R_2):=(L_1L_2,R_2R_1)$;
 \item $(L_1,R_1)^*:=(R_1^*,L_1^*)$.
\end{enumerate}
\end{Def}

A straightforward computation shows that $\mathcal{DC}(A)$ with the operations defined above is a ${}^*$-algebra.
The elements of $A$ correspond to the elements of $\mathcal{DC}(A)$ via the map,
called the \emph{double representation} of $A$ (see \cite[p. 301]{Json}),

\begin{equation}
\varrho\colon A\to\mathcal{DC}(A),\quad\varrho(x):=(L_x,R_x),
\label{double-reprpesentation}
\end{equation}
where $L_x(y):=xy$ and $R_x(y):=yx$ are the left and right multiplication maps, respectively. One can easily show that $\varrho$ is a homomorphism
of ${}^*$-algebras.

\begin{Def}\label{def_faithful}
Let $A$ be an algebra over $\C$ and let $I$ be an ideal in $A$.
\begin{enumerate}
 \item[\upshape(i)] We say that $A$ is \emph{faithful} if for every $x\in A$ we have: $xA=\{0\}$ implies $x=0$ and $Ax=\{0\}$ implies $x=0$.
 \item[\upshape(ii)] An ideal $I$ is called \emph{essential} in $A$ if for every $x\in A$ the following implications hold: if $xI=\{0\}$ then $x=0$ and if $Ix=\{0\}$ then $x=0$.
\end{enumerate}
\end{Def}

It is well-known that every $C^*$-algebra is an essential ideal in its multiplier algebra \cite[p. 82]{Mur}), so it is faithful (see also \cite[Cor. 2.4]{Busby}). For example, the $C^*$-algebra of
compact operators on $\ell_2$ is an essential ideal in $\cL(\ell_2)$. The analogue of this result holds for $\cS$ and $\lslds$.

\begin{Prop}\label{prop_ldss_faithful}
An algebra $\cS$ is an essential ideal in $\lslds$. In particular, $\cS$ is faithful.
\end{Prop}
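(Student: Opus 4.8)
The statement has two parts: that $\cS$ is an \emph{ideal} in $\lslds$, and that this ideal is \emph{essential}. The faithfulness of $\lslds$ then follows formally, since every element of $\lslds$ multiplies the identity-like structure of $\cS$ nontrivially. The plan is to verify the ideal property first, then the two essentiality implications, and finally to deduce faithfulness.

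\textbf{Step 1: $\cS$ is an ideal.} Recall that $\cS=\ldss=\cL(s',s)$. I would take $x\in\cS$, so $x\colon s'\to s$ is continuous, and $y\in\lslds$, so $y$ maps $s\to s$ and $s'\to s'$ continuously (where the latter is the unique extension $\widetilde y$). The product $yx$ is then the composition $s'\overset{x}{\to}s\overset{y}{\to}s$, which is a continuous map $s'\to s$, hence lies in $\cS$. Symmetrically, $xy$ is $s'\overset{\widetilde y}{\to}s'\overset{x}{\to}s$, again continuous from $s'$ to $s$, so $xy\in\cS$. Thus $\cS$ is a two-sided ideal. This step is essentially a bookkeeping argument about which spaces each operator acts between; the heuristic paragraph preceding Definition \ref{def_double_centralizer} already records exactly this compatibility, so I expect no difficulty here.

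\textbf{Step 2: essentiality.} I must show that if $x\in\lslds$ satisfies $x\cS=\{0\}$ then $x=0$, and symmetrically for $\cS x=\{0\}$. The key is that $\cS$ contains enough rank-one operators to separate points. For $\xi\in s$ and $\eta\in s'$, the rank-one operator $\eta\otimes\xi\colon s'\to s$, defined by $\zeta\mapsto\langle\zeta,\eta\rangle\xi$, belongs to $\cS$. Now suppose $x\cS=\{0\}$; applying $x$ to such rank-one operators, $x(\eta\otimes\xi)=0$ forces $x\xi=0$ for all $\xi\in s$ (choosing $\eta$ with $\langle\cdot,\eta\rangle\not\equiv 0$), and since $x$ is already determined on $s$, this gives $x=0$ as an element of $\ls$, hence as an element of $\lslds$. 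The argument for $\cS x=\{0\}$ is symmetric, using instead the extension $\widetilde x\colon s'\to s'$ and testing against the rank-one operators to conclude $\widetilde x\eta=0$ for all $\eta\in s'$. I expect this to be the main obstacle, in the mild sense that one must be careful about \emph{which} realization of $x$ (as an operator on $s$ versus its extension to $s'$) is being annihilated, and to confirm that vanishing on $s$ (resp.\ $s'$) already suffices for $x=0$ in $\lslds$, given the uniqueness of the extension built into the definition of $\lslds$.

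\textbf{Step 3: faithfulness.} This is immediate from essentiality. Since $\cS\subseteq\lslds$, if $x\in\lslds$ satisfies $x\lslds=\{0\}$ then in particular $x\cS=\{0\}$, so $x=0$ by Step 2; similarly for left annihilation. Hence $\lslds$ is faithful. I would state this as a one-line corollary of the essentiality, noting that the inclusion $\cS\subseteq\lslds$ is what makes annihilation of the larger algebra imply annihilation of the ideal.
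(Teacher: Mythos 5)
Your proof follows essentially the same route as the paper's: the ideal property is a bookkeeping check of which spaces the compositions act between, and essentiality is obtained by testing $x$ against rank-one operators in $\cS$ (the paper uses $z=\langle\cdot,\xi\rangle\xi$ with $\xi\in s$ and then gets the second implication by applying the involution, whereas you argue the second implication directly via $\widetilde x$; both work). Two small slips should be corrected. First, in Step 2 you claim that $\eta\otimes\xi=\langle\cdot,\eta\rangle\xi$ belongs to $\cS=\cL(s',s)$ for $\eta\in s'$; this is false for $\eta\in s'\setminus s$, since $\langle\zeta,\eta\rangle$ is then not defined (let alone continuous) for all $\zeta\in s'$ --- the rank-one elements of $\cS$ are exactly those with \emph{both} $\xi,\eta\in s$. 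Your argument survives unchanged because a single nonzero $\eta\in s$ already suffices in each implication, but the assertion as written is wrong. Second, the ``in particular'' clause of the proposition asserts that $\cS$ is faithful, not that $\lslds$ is; your Step 3 proves the latter. The intended deduction is: if $x\in\cS$ and $x\cS=\{0\}$, then regarding $x$ as an element of $\lslds$ and invoking essentiality of the ideal $\cS$ gives $x=0$ (and symmetrically for left annihilation). This is just as immediate as what you wrote, but it is the statement the paper actually needs later (Proposition \ref{prop_(L,R)_L(s',s)} is applied with $A=\cS$ faithful).
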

\bproof Clearly, $\cS$ is an ideal in $\lslds$. Assume that $xz=0$ for all $z\in \cS$. Then, in particular, for $\xi\in s$ and $z:=\langle\cdot,\xi\rangle\xi$, we get
$\langle\cdot,\xi\rangle x(\xi)=0$. Thus $x(\xi)=0$ for all $\xi\in s$, i.e $x=0$. This gives $x\cS=\{0\}\Rightarrow x=0$. Applying now the involution we get the implication
$\cS x=\{0\}\Rightarrow x=0$.
\bqed

The following result can be deduced from \cite[Th. 7 \& Th. 14]{Json}. For the convenience of the reader we present a more direct proof.
We follow the proof of \cite[Prop. 2.5]{Busby} (the case of $C^*$-algebras).

\begin{Prop}\label{prop_(L,R)_L(s',s)}
Let $A$ be a faithful $m$-convex Fr\'echet algebra and let $(L,R)\in\mathcal{DC}(A)$. Then
\begin{enumerate}[\upshape(i)]
 \item $L$ and  $R$ are linear continuous maps on $A$;
 \item $L(xy)=L(x)y$ for every $x,y\in A$;
 \item $R(xy)=xR(y)$ for every $x,y\in A$.
\end{enumerate}
\end{Prop}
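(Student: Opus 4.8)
The plan is to prove the three claims in the natural order (ii), (iii), then (i), since the algebraic identities are what make linearity and continuity tractable. First I would establish (ii). Fix $x,y\in A$ and use the defining relation $zL(w)=R(z)w$ of the double centralizer, together with associativity in $A$. The idea is to compute $L(xy)$ against an arbitrary test element: for any $w\in A$ one has $wL(xy)=R(w)(xy)=(R(w)x)y=(wL(x))y=w(L(x)y)$, using the defining relation twice and associativity. Hence $w\bigl(L(xy)-L(x)y\bigr)=0$ for all $w\in A$, and since $A$ is faithful this forces $L(xy)=L(x)y$. Claim (iii) is entirely symmetric: testing on the right, $R(xy)w=(xy)L(w)=x(yL(w))=x(R(y)w)=(xR(y))w$ for all $w$, and faithfulness again gives $R(xy)=xR(y)$.

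Next I would use (ii) and (iii) to get linearity. The key observation is that once $L$ and $R$ respect multiplication in the above one-sided sense, additivity and homogeneity can be transferred from the multiplication maps, which are manifestly linear. Concretely, for the defining relation $xL(y)=R(x)y$, I would fix the second argument and vary the first (and vice versa) to show $L$ and $R$ are additive and $\mathbb{C}$-homogeneous; the faithfulness of $A$ is what lets me cancel the universally-quantified test element at each step. So linearity reduces to the same cancellation trick already used for (ii) and (iii), applied to expressions like $w\bigl(L(y_1+y_2)-L(y_1)-L(y_2)\bigr)$.

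The genuinely hard part is the continuity assertion in (i), and here is where the hypotheses that $A$ is an $m$-convex \emph{Fr\'echet} algebra enter essentially. Since $A$ is Fr\'echet and $L,R$ are now known to be linear, the natural tool is the closed graph theorem: it suffices to show $L$ (and $R$) has closed graph. So I would take a sequence $x_k\to x$ in $A$ with $L(x_k)\to y$ and aim to prove $y=L(x)$. The strategy is to test against an arbitrary $w\in A$: by the defining relation $wL(x_k)=R(w)x_k$, and $R(w)$ is a fixed left-multiplication-type map which is continuous by $m$-convexity (multiplication is jointly continuous, so $z\mapsto R(w)z=wz\cdots$ — more precisely one uses the submultiplicativity of the seminorms to bound $wL(x_k)$). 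Passing to the limit gives $wy=R(w)x=wL(x)$ for every $w$, whence $y=L(x)$ by faithfulness, so the graph is closed. The obstacle to watch is justifying the limit interchange: this is exactly where $m$-convexity is needed, since it guarantees a continuous, submultiplicative system of seminorms making the maps $z\mapsto wz$ and $z\mapsto zw$ continuous, so that $wL(x_k)\to wy$ and $R(w)x_k\to R(w)x$ can both be taken termwise. Once the graph is closed, the closed graph theorem for Fr\'echet spaces delivers continuity of $L$, and the argument for $R$ is symmetric.
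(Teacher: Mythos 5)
Your proposal is correct and follows essentially the same route as the paper's proof: the algebraic identities (linearity, (ii), (iii)) are obtained by testing against an arbitrary element and cancelling via faithfulness, and continuity is obtained from the closed graph theorem for Fr\'echet spaces, using the submultiplicative seminorms to pass to the limit in $zL(x_k)=R(z)x_k$. The only cosmetic difference is the order of the steps (the paper does (i) first, testing a sequence $x_j\to 0$ rather than $x_k\to x$), which changes nothing of substance.
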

\bproof
(i) Let $x,y,z\in A$, $\alpha,\beta\in\C$. Then
\[zL(\alpha x+\beta y)=R(z)(\alpha x+\beta y)=\alpha R(z)x+\beta R(z)y=z(\alpha L(x)+\beta L(y)),\]
and since $A$ is faithful, $L(\alpha x+\beta y)=\alpha L(x)+\beta L(y)$.

Now, let $(x_j)_{j\in\N}\subset A$ and assume that $x_j\to0$ and $L(x_j)\to y$ (convergence in the topology of $A$).
Let $(||\cdot||_q)_{q\in\N_0}$ be a fundamental system of submultiplicative seminorms on $A$. Then
\begin{align*}
||zy||_q&\leqslant||zy-zL(x_j)||_q+||zL(x_j)||_q=||z(y-L(x_j))||_q+||R(z)x_j||_q\\
&\leqslant||z||_q\cdot||y-L(x_j)||_q+||R(z)||_q\cdot||x_j||_q\to0,
\end{align*}
as $j\to\infty$, so $||zy||_q=0$ for every $q\in\N_0$, and therefore $zy=0$. Hence, by the assumption on $A$, $y=0$. Now, by the Closed Graph Theorem for
Fr\'echet spaces (see e.g. \cite[Th. 24.31]{MV}), $L$ is continuous. Analogous arguments work for the map $R$.

(ii) Let $x,y,z\in A$. Then
\[zL(xy)=R(z)xy=(R(z)x)y=(zL(x))y=z(L(x)y),\]
and therefore, $L(xy)=L(x)y$.

(iii) Analogously as in (ii).
\bqed

Now, we need to prove that elements of $\lslds$ can be seen as some unbounded operators on $\ell_2$, namely as the elements of the class
\[\cL^*(s):=\{x\colon s\to s: x\textrm{ is linear},s\subset\mathcal{D}(x^*)\textrm{ and }x^*(s)\subset s\}.\]
Here
\[\cD(x^*):=\{\eta\in\ell_2:\exists\zeta\in\ell_2\;\forall\xi\in s\quad\langle x\xi,\eta\rangle=\langle\xi,\zeta\rangle\}\]
and $x^*\eta:=\zeta$ for $\eta\in\cD(x^*)$ (one can show that such a vector $\zeta$ is unique). This $^*$-operation defines a natural involution on $\cL^*(s)$.
$\cL^*(s)$ is known as the maximal $\mathcal{O}^*$-algebra with domain $s$ and it can be viewed as the largest ${}^*$-algebra of unbounded operators
on $\ell_2$ with domain $s$ (see \cite[I.2.1]{Sch} for details). Let $\cB$ denote the set of all bounded subsets of $s$.
We endow $\lstars$ with the topology $\tau_{\lstars}$ given by the family $\{p_{n,B}\}_{n\in\N_0,B\in\cB}$ of seminorms,

\begin{equation}
p_{n,B}(x):=\max\{\sup_{\xi\in B}|x\xi|_n,\sup_{\xi\in B}|x^*\xi|_n\}.
\label{pnB}
\end{equation}
It is well-known that $\lstars$ is a complete locally convex space and a topological ${}^*$-algebra (see \cite[Prop. 3.3.15]{Sch} and Remark \ref{rem_lstars} below).

\begin{Rem}\label{rem_lstars}
The so-called graph topology on $s$ of the $\mathcal{O}^*$-algebra $\lstars$ is given by the system of seminorms $(||\cdot||_a)_{a\in\lstars}$, $||\xi||_a:=||a\xi||_{\ell_2}$ (\cite[Def. 2.1.1]{Sch}).
It is easy to see that the usual Fr\'echet space topology on $s$ is equal to the graph topology (consider the diagonal map $s\ni(\xi_j)_{j\in\N}\mapsto(j^n\xi_j)_{j\in\N}\in s$, $n\in\N_0$),
and therefore the topology $\tau_{\lstars}$ on $\lstars$ coincides with the topology $\tau^*$ (see \cite[pp. 81--82]{Sch}) defined by the seminorms $\{p^{a,B}\}_{a\in\lstars,B\in\cB}$,
\[p^{a,B}(x):=\max\Big\{\sup_{\xi\in B}||ax\xi||_{\ell_2},\,\sup_{\xi\in B}||ax^*\xi||_{\ell_2}\Big\}.\]
\end{Rem}


\begin{Lem}\label{lem_extension_of_op_lstars}
We have $\lstars\subset\ls$ and every operator $x\in\lstars$ can be extended to an operator $\widetilde{x}\in\lds$. Moreover,
\[\langle x^*\eta,\xi\rangle=\langle\eta,\widetilde{x}\xi\rangle\]
for all $\xi\in s'$ and $\eta\in s$.
\end{Lem}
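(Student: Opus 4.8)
The plan is to prove the three assertions in turn, the common engine being that the adjoint $x^*$ already belongs to $\lstars$ (since $\lstars$ is a ${}^*$-algebra) and so, once the first inclusion is in hand, is a \emph{continuous} operator $s\to s$ whose transpose we can exploit.

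First I would show $\lstars\subset\ls$. Fix $x\in\lstars$. Since $x$ is linear and $s$ is a Fr\'echet space, by the Closed Graph Theorem it suffices to check that the graph of $x\colon s\to s$ is closed. Suppose $\xi_k\to\xi$ and $x\xi_k\to\psi$ in $s$; as $|\cdot|_0$ is the $\ell_2$-norm and belongs to the defining system of $s$, both convergences also hold in $\ell_2$. Because $s\subset\cD(x^*)$, for every $\eta\in s$ we have the adjoint relation $\langle x\xi_k,\eta\rangle=\langle\xi_k,x^*\eta\rangle$; passing to the limit gives $\langle\psi,\eta\rangle=\langle\xi,x^*\eta\rangle=\langle x\xi,\eta\rangle$. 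As $s$ is dense in $\ell_2$, this forces $\psi=x\xi$, so the graph is closed and $x\in\ls$.

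Next I would construct $\widetilde{x}$. As $\lstars$ is a ${}^*$-algebra, $x^*\in\lstars\subset\ls$, i.e. $x^*\colon s\to s$ is continuous. For each $\xi\in s'$ the map $\eta\mapsto\langle x^*\eta,\xi\rangle$ is a continuous functional on $s$ (being $\xi\in s'$ precomposed with the continuous $x^*$), hence is represented by a unique element of $s'$, which I denote $\widetilde{x}\xi$; thus $\widetilde{x}$ is linear by construction and satisfies the displayed identity $\langle\eta,\widetilde{x}\xi\rangle=\langle x^*\eta,\xi\rangle$ for all $\xi\in s'$, $\eta\in s$. In other words $\widetilde{x}$ is the transpose of $x^*$ for the duality between $s$ and $s'$. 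Its continuity $s'\to s'$ is the standard transpose estimate: for any bounded $A\subset s$ the set $x^*(A)$ is bounded in $s$, and the identity gives $\sup_{\eta\in A}|\langle\eta,\widetilde{x}\xi\rangle|=\sup_{\eta\in A}|\langle x^*\eta,\xi\rangle|$, so $\widetilde{x}$ maps the polar $(x^*(A))^\circ$ into $A^\circ$; this yields continuity for the strong dual topology, i.e. $\widetilde{x}\in\lds$.

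Finally I would check that $\widetilde{x}$ extends $x$. For $\xi\in s$ the adjoint relation reads $\langle x^*\eta,\xi\rangle=\langle\eta,x\xi\rangle$, so the defining identity becomes $\langle\eta,\widetilde{x}\xi\rangle=\langle\eta,x\xi\rangle$ for every $\eta\in s$; since $s$ separates the points of $s'$, we conclude $\widetilde{x}\xi=x\xi$. (Uniqueness of such an extension, implicit in the definition of $\lslds$, then follows from the density of $s$ in $s'$.) I expect the only delicate point to be the bookkeeping of the sesquilinear pairing: making sure that, with the paper's convention, $\eta\mapsto\langle x^*\eta,\xi\rangle$ genuinely defines an element of $s'$ (and not of its conjugate) and that $\langle x^*\eta,\xi\rangle=\langle\eta,x\xi\rangle$ holds on the nose. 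Everything else reduces to the closed-graph and polar computations above.
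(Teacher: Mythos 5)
Your argument is correct and follows essentially the same route as the paper: the inclusion $\lstars\subset\ls$ via the Closed Graph Theorem using the adjoint relation, and the construction of $\widetilde{x}$ as the element of $s'$ representing the continuous functional $\eta\mapsto\langle x^*\eta,\xi\rangle$ (the transpose of the continuous map $x^*\colon s\to s$). The only cosmetic difference is that you prove continuity of $\widetilde{x}$ by a polar argument, while the paper writes out the equivalent seminorm estimate $|\widetilde{x}\xi|_{-r}\leqslant C|\xi|_{-q}$ explicitly; the pairing bookkeeping you flag does work out with the paper's conventions.
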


\bproof
Take $x\in\cL^*(s)$. Let $(\xi_j)_{j\in\N}\subset s$ and assume that $\xi_j\to 0$ and $x\xi_j\to\eta$ as $j\to\infty$.
Then, for every $\zeta\in s$, we have
\[\langle x\xi_j,\zeta\rangle=\langle\xi_j,x^*\zeta\rangle\to 0\]
and, on the other hand,
\[\langle x\xi_j,\zeta\rangle\to\langle\eta,\zeta\rangle.\]
Hence $\langle\eta,\zeta\rangle=0$ for every $\zeta\in s$, and therefore $\eta=0$.
By the Closed Graph Theorem for Fr\'echet spaces, $x\colon s\to s$ is continuous, and thus $\lstars\subset\ls$.

Fix $x\in\lstars$, $\xi\in s'$, and define a linear
functional $\phi_\xi\colon s\to\C$, $\phi_\xi(\eta):=\langle x^*\eta,\xi\rangle$. From the continuity of $x^*\colon s\to s$, it follows that
for every $q\in\N_0$ there is $r\in\N_0$ and $C>0$ such that $|x^*\eta|_q\leqslant C|\eta|_r$ for all $\eta$ in $s$. Hence, with the same quantifiers,
we get
\begin{equation}\label{eq_cont_phi_xi(eta)}
|\phi_\xi(\eta)|=|\langle x^*\eta,\xi\rangle|\leqslant|x^*\eta|_q\cdot|\xi|_{-q}\leqslant C|\eta|_r\cdot|\xi|_{-q},
\end{equation}
and thus $\phi_\xi$ is continuous.
Consequently, for each $\xi\in s'$ we can find a unique $\zeta\in s'$ such that
\[\langle\eta,\zeta\rangle=\phi_\xi(\eta)=\langle x^*\eta,\xi\rangle\]
for all $\eta\in s$ and we may define $\widetilde{x}\colon s'\to s'$ by $\widetilde{x}\xi:=\zeta$.
Clearly, $\widetilde{x}$ is a linear extension of $x$, and moreover $\widetilde x$ is continuous. Indeed, by (\ref{eq_cont_phi_xi(eta)}),
for every $q\in\N_0$ there is $r\in\N_0$ and $C>0$ such that
\[|\widetilde x\xi|_{-r}=\sup_{|\eta|_r\leqslant1}|\langle\eta,\widetilde x\xi\rangle|=\sup_{|\eta|_r\leqslant1}|\langle x^*\eta,\xi\rangle|\leqslant C|\xi|_{-q}\]
for all $\xi\in s'$. This shows the continuity of $\widetilde x$, and the proof is complete.
\bqed

The following result follows also from \cite[Prop. 2.2]{Kur}.

\begin{Prop}\label{prop_lstars=lslds}
$\cL^*(s)=\ls\cap\lds$ as sets.
\end{Prop}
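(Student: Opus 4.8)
The plan is to prove the two inclusions separately; the first is immediate and the second carries the real content.

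For the inclusion $\lstars\subseteq\ls\cap\lds$ there is nothing new to do: it is exactly the content of Lemma \ref{lem_extension_of_op_lstars}. That lemma gives $\lstars\subset\ls$ and shows that every $x\in\lstars$ admits a continuous extension $\widetilde x\in\lds$ with $\widetilde x\mid_s=x$ (unique, since $s$ is dense in $s'$). By the very definition of $\ls\cap\lds=\lslds$, this says precisely that $x\in\ls\cap\lds$.

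For the reverse inclusion $\ls\cap\lds\subseteq\lstars$ I would fix $x\in\ls\cap\lds$, so $x\colon s\to s$ is continuous and there is a continuous extension $\widetilde x\colon s'\to s'$ with $\widetilde x\mid_s=x$. To conclude $x\in\lstars$ I must produce, for every $\eta\in s$, a vector $\zeta\in\ell_2$ with $\langle x\xi,\eta\rangle=\langle\xi,\zeta\rangle$ for all $\xi\in s$, and then check that this $\zeta$, which is by definition $x^*\eta$, in fact lies in $s$. The idea is to read $\zeta$ off from the transpose of $\widetilde x$, mirroring the identity $\langle x^*\eta,\xi\rangle=\langle\eta,\widetilde x\xi\rangle$ of Lemma \ref{lem_extension_of_op_lstars}. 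For fixed $\eta\in s$ consider the functional $s'\ni\xi\mapsto\langle\eta,\widetilde x\xi\rangle$; it is continuous on $s'$, being the composition of the continuous map $\widetilde x\colon s'\to s'$ with the continuous (conjugate-)linear functional $\langle\eta,\cdot\rangle$ on $s'$, which is continuous because $\eta\in s=(s')'$. The crucial point is that $s$ is a nuclear Fr\'echet space, hence Montel, hence reflexive, so $(s')'_\beta=s$ and the functional above is represented by a unique element $\zeta\in s$: one has $\langle\zeta,\xi\rangle=\langle\eta,\widetilde x\xi\rangle$ for all $\xi\in s'$. Restricting to $\xi\in s$ and using $\widetilde x\mid_s=x$ gives $\langle\zeta,\xi\rangle=\langle\eta,x\xi\rangle$, and taking complex conjugates yields $\langle\xi,\zeta\rangle=\langle x\xi,\eta\rangle$ for all $\xi\in s$. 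Thus $\eta\in\cD(x^*)$, $x^*\eta=\zeta\in s$, and since $\eta\in s$ was arbitrary we get $s\subset\cD(x^*)$ and $x^*(s)\subset s$, i.e. $x\in\lstars$.

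I expect the only genuine obstacle to be this reverse inclusion, and within it the observation that one must invoke reflexivity of $s$ to ensure that the functional coming from $\widetilde x$ is represented by an element of $s$ and not merely of $s'$; without this the argument would only yield $x^*\eta\in s'$, which is too weak. The bookkeeping of the sesquilinear versus bilinear pairing — keeping track of the complex conjugations when passing from $\langle\eta,\widetilde x\xi\rangle$ to $\langle x\xi,\eta\rangle$ — is routine and can be made precise either by inserting the coordinatewise conjugation map on $\ell_2$ (which preserves both $s$ and $s'$) or, as above, by noting that $\langle\eta,\cdot\rangle$ is conjugate-linear so that $s$ represents exactly the continuous conjugate-linear functionals on $s'$.
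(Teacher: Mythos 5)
Your proposal is correct and follows essentially the same route as the paper: the forward inclusion is quoted from Lemma \ref{lem_extension_of_op_lstars}, and the reverse inclusion is obtained by showing that, for fixed $\eta\in s$, the functional induced by $\widetilde x$ on $s'$ is continuous and hence, by reflexivity of $s$, represented by an element $\zeta\in s$, which is then identified with $x^*\eta$. The only cosmetic difference is that the paper justifies the continuity of this functional via a quantitative estimate from the Grothendieck Factorization Theorem, whereas you argue softly by composing two continuous maps; both are valid.
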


\bproof
The inclusion $\cL^*(s)\subset\ls\cap\lds$ follows directly from Lemma \ref{lem_extension_of_op_lstars}.

Let $x\in\lslds$. For each $\eta\in s$ we define a linear functional $\psi_{\eta}\colon s'\to \C$,
$\psi_{\eta}(\xi):=\langle\widetilde x\xi,\eta\rangle$, where $\widetilde x\colon s'\to s'$ is the continuous extension of $x$.
By the continuity of the operator $\widetilde{x}$ and the Grothendieck Factorization Theorem \cite[24.33]{MV}, it follows that for every $r\in\N_0$ there is $q\in\N_0$ and $C>0$ such that
$|\widetilde{x}\xi|_{-q}\leqslant C|\xi|_{-r}$ for $\xi\in s'$. Hence, for $\xi\in s'$, we have
\[|\psi_\eta(\xi)|=|\langle\widetilde x\xi,\eta\rangle|\leqslant|\widetilde x\xi|_{-q}\cdot|\eta|_q\leqslant C|\eta|_q\cdot|\xi|_{-r}.\]
This shows that $\psi_\eta$ is continuous, and therefore there exists $\zeta\in s$ such that $\psi_\eta(\cdot)=\langle\cdot,\zeta\rangle$.
Consequently, $\langle x\xi,\eta\rangle=\langle\xi,\zeta\rangle$
for $\xi\in s$, hence $s\subset\mathcal{D}(x^*)$ and $x^*(s)\subset s$, that is, $x\in\cL^*(s)$.
\bqed

If we endow $\lslds$ with the involution on $\lstars$, we obtain that:

\begin{Prop}\label{prop_lstars=lslds_as_top_star_alg}
$\cL^*(s)=\ls\cap\lds$ as locally convex spaces and ${}^*$-algebras. Consequently, $\lslds$ is a complete locally convex space and a topological ${}^*$-algebra.
\end{Prop}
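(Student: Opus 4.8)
The plan is to reduce everything to the set-theoretic identity $\lstars=\lslds$ already established in Proposition \ref{prop_lstars=lslds}, and then to check that, under this identification, the two defining families of seminorms coincide \emph{term by term}, so that no estimates with constants are needed at all.

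First I would settle the algebraic part. Viewed either as an element of $\lstars$ or of $\ls$, an operator $x$ acts on $s$, and in both algebras the product is composition of such operators; hence the two multiplications agree. Since $\lstars=\lslds$ as sets and $\lstars$ is an $\mathcal{O}^*$-algebra, $\lslds$ is closed under the adjoint $x\mapsto x^*$, which is exactly the involution with which $\lslds$ was equipped above. Thus the set equality is automatically an equality of $*$-algebras.

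The heart of the matter is the claim that $p_{n,B}(x)=q_{n,B}(x)$ for every $x$, every $n\in\N_0$ and every $B\in\cB$, with $p_{n,B}$ and $q_{n,B}$ as in \eqref{pnB} and \eqref{qnB}. The first terms of the two maxima are literally identical, namely $\sup_{\xi\in B}|x\xi|_n$, so only the second terms must be matched. For $q_{n,B}$ the second term, in which $\xi$ ranges over $U_n^\circ\subset s'$, reads $\sup_{\xi\in U_n^\circ}|\widetilde x\xi|'_B$, where $\widetilde x\in\lds$ is the extension of $x$ from Lemma \ref{lem_extension_of_op_lstars}. Unfolding $|\cdot|'_B$ and interchanging the two suprema (both range over bounded sets and the integrand is nonnegative) gives
\[\sup_{\xi\in U_n^\circ}|\widetilde x\xi|'_B=\sup_{\eta\in B}\sup_{\xi\in U_n^\circ}|\langle\eta,\widetilde x\xi\rangle|.\]
The adjoint relation $\langle\eta,\widetilde x\xi\rangle=\langle x^*\eta,\xi\rangle$ from Lemma \ref{lem_extension_of_op_lstars} rewrites the inner supremum as $\sup_{\xi\in U_n^\circ}|\langle x^*\eta,\xi\rangle|$, and since $U_n^\circ$ is precisely the unit ball of $s_{-n}$, the Hilbert-space duality $(s_n)'=s_{-n}$ identifies this with $|x^*\eta|_n$ (here one uses $x^*\eta\in s\subset s_n$). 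Hence the second term equals $\sup_{\eta\in B}|x^*\eta|_n$, which is exactly the second term of $p_{n,B}$.

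It follows that the families $\{p_{n,B}\}$ and $\{q_{n,B}\}$ are identical, so $\tau_{\lstars}=\tau_{\lslds}$ and $\lstars=\lslds$ as locally convex spaces; combined with the algebraic remark, as locally convex $*$-algebras. The final assertion is then immediate, since it was already recorded that $\lstars$ is a complete locally convex space and a topological $*$-algebra, and these properties pass verbatim to $\lslds$ through the isomorphism just obtained. The only step demanding genuine care is the interchange of suprema together with the use of the $s_n$--$s_{-n}$ duality to recognise $\sup_{\xi\in U_n^\circ}|\langle x^*\eta,\xi\rangle|$ as the Hilbert norm $|x^*\eta|_n$; everything else is bookkeeping.
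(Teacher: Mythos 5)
Your proposal is correct and follows essentially the same route as the paper: reduce to the set/\(*\)-algebra equality from Proposition \ref{prop_lstars=lslds}, then show \(p_{n,B}=q_{n,B}\) via the chain \(\sup_{\eta\in B}|x^*\eta|_n=\sup_{\eta\in B}\sup_{\xi\in U_n^\circ}|\langle x^*\eta,\xi\rangle|=\sup_{\eta\in B}\sup_{\xi\in U_n^\circ}|\langle\eta,\widetilde x\xi\rangle|=\sup_{\xi\in U_n^\circ}|\widetilde x\xi|'_B\), which is exactly the paper's identity \eqref{eq_pnB=qnB} read in the opposite direction. The duality step \(\sup_{\xi\in U_n^\circ}|\langle x^*\eta,\xi\rangle|=|x^*\eta|_n\) and the interchange of suprema that you flag as the delicate points are precisely what the paper uses implicitly, so nothing is missing.
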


\bproof
By definition and Proposition \ref{prop_lstars=lslds}, $\cL^*(s)=\ls\cap\lds$ as ${}^*$-algebras. Let us compare fundamental systems
$\{p_{n,B}\}_{n\in\N_0,B\in\cB}$ \eqref{pnB} and $\{q_{n,B}\}_{n\in\N_0,B\in\cB}$ \eqref{qnB} of seminorms on $\cL^*(s)$ and $\ls\cap\lds$, respectively.

Take $x\in\lstars$ with its unique extension $\widetilde{x}\in\lds$. Let $B$ be a bounded subset of $s$ and $n\in\N_0$. Then, by Lemma \ref{lem_extension_of_op_lstars},
\begin{equation}\label{eq_pnB=qnB}
\sup_{\eta\in B}|x^*\eta|_n=\sup_{\eta\in B}\sup_{\xi\in U^\circ_n}|\langle x^*\eta,\xi\rangle|=\sup_{\eta\in B}\sup_{\xi\in U^\circ_n}|\langle \eta,\widetilde{x}\xi\rangle|
=\sup_{\xi\in U_n^\circ}|\widetilde{x}\xi|'_B.
\end{equation}
This shows that $p_{n,B}(x)=q_{n,B}(x)$.

Since $\lstars$ is a complete locally convex space and a topological ${}^*$-algebra (see \cite[Prop. 3.3.15]{Sch} and Remark \ref{rem_lstars}), the result follows.
\bqed

By Propositions \ref{prop_ldss_faithful} and \ref{prop_(L,R)_L(s',s)}, $\mathcal{DC}(\cS)\subset\cL(\cS)\times\cL(\cS)$, and thus we may endow $\mathcal{DC}(\cS)$ with
the corresponding subspace topology, denoted by $\tau_{\mathcal{DC}(\cS)}$. Note that a typical continuous seminorm of an element $(L,R)$ of $\cL(\cS)\times\cL(\cS)$ is given by the formula
$\max\{\sup_{x\in M}||L(x)||_n,\sup_{x\in M}||R(x)||_n\}$, where $M$ is a bounded subset of $\cS$ and $n\in\N_0$.

For $u\in\lslds$ we define the left and right multiplication maps $L_u, R_u\colon \cS\to \cS$, $L_u(x):=ux$, $R_u(x):=x\widetilde{u}$, where $\widetilde{u}\colon s'\to s'$
is the extension of $u$ according to the definition of $\lslds$.

Theorem \ref{th_Ls=DC} below states that the double representation \eqref{double-reprpesentation} of $\cS$ can be extended to an isomorphism of topological ${}^*$-algebras
$\lslds$ and $\mathcal{DC}(\cS)$, and thus $\lslds$ can be seen as the multiplier algebra of $\cS$ (compare with \cite[Th. 3.1.8 \& Example 3.1.2]{Mur}).

\begin{Th}\label{th_Ls=DC}
The map $\widetilde{\varrho}\colon\lslds\to\mathcal{DC}(\cS)$, $u\mapsto (L_u,R_u)$ is an isomorphism of locally convex spaces and ${}^*$-algebras. Consequently, $\mathcal{DC}(\cS)$
is a complete locally convex space and topological ${}^*$-algebra.
\end{Th}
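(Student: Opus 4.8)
The plan is to prove that $\widetilde\varrho$ is a well-defined injective ${}^*$-homomorphism, then that it is onto, and finally that it is a homeomorphism; the completeness of $\mathcal{DC}(\cS)$ will then follow from that of $\lslds$ (Proposition \ref{prop_lstars=lslds_as_top_star_alg}). For well-definedness, fix $u\in\lslds$ with extension $\widetilde u\in\lds$. Since $\cS=\cL(s',s)$, for $x\in\cS$ both compositions $ux$ and $x\widetilde u$ again map $s'\to s$, so $L_u,R_u\colon\cS\to\cS$. The centralizer identity $xL_u(y)=R_u(x)y$ reduces to $x\circ u\circ y=x\circ\widetilde u\circ y$ on $s'$, which holds because $y$ maps into $s$ and $\widetilde u|_s=u$; hence $(L_u,R_u)\in\mathcal{DC}(\cS)$.

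Next I would check that $\widetilde\varrho$ is a ${}^*$-homomorphism. Multiplicativity amounts to $L_{uv}=L_uL_v$ and $R_{uv}=R_vR_u$; the first is immediate and the second follows once we know $\widetilde{uv}=\widetilde u\,\widetilde v$, which holds because $\widetilde u\widetilde v$ is a continuous map on $s'$ restricting to $uv$ on $s$ and such an extension is unique. For the involution I would verify $L_{u^*}=R_u^*$ and $R_{u^*}=L_u^*$; writing $R_u^*(x)=(x^*\widetilde u)^*$ and testing against vectors in $s$, the identity $\langle u^*\eta,\xi\rangle=\langle\eta,\widetilde u\xi\rangle$ from Lemma \ref{lem_extension_of_op_lstars} (together with the fact that the involution on $\cS$ is the Hilbert-space adjoint) yields $R_u^*(x)=u^*x=L_{u^*}(x)$ on the dense subspace $s$, hence everywhere. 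Injectivity is immediate: if $\widetilde\varrho(u)=0$ then $u\cS=\{0\}$, so $u=0$ by the essentiality of $\cS$ in $\lslds$ (Proposition \ref{prop_ldss_faithful}).

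The core of the argument is surjectivity. Given $(L,R)\in\mathcal{DC}(\cS)$, Proposition \ref{prop_(L,R)_L(s',s)} guarantees that $L,R$ are linear, continuous, and satisfy $L(xy)=L(x)y$, $R(xy)=xR(y)$. Fixing $\eta_0,\zeta_0\in s$ with $\langle\zeta_0,\eta_0\rangle=1$, I would define $u\colon s\to s$ by $u\xi:=L(\langle\,\cdot\,,\eta_0\rangle\xi)\zeta_0$; linearity and continuity of $u$ follow from those of $L$ and of the maps $\xi\mapsto\langle\,\cdot\,,\eta_0\rangle\xi$ and $y\mapsto y\zeta_0$, so $u\in\ls$. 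Using $\langle\,\cdot\,,\eta_0\rangle(x\mu)=x\circ(\langle\,\cdot\,,\eta_0\rangle\mu)$ for $\mu\in s$ and the module property $L(x\cdot z)=L(x)z$, one computes $u(x\mu)=L(x)\mu$ for all $\mu\in s$; since $s$ is dense in $s'$ and both sides are continuous, $ux=L(x)$, i.e. $L=L_u$. The main obstacle is then to show $u\in\lslds$ rather than merely $u\in\ls$. For this I would feed rank-one operators into the centralizer relation $xL(y)=R(x)y$: taking $x=\langle\,\cdot\,,\beta\rangle\alpha$ and $y=\langle\,\cdot\,,\eta\rangle\xi$ gives $R(\langle\,\cdot\,,\beta\rangle\alpha)\xi=\langle u\xi,\beta\rangle\alpha$, and choosing $\gamma\in s$ with $\langle\alpha,\gamma\rangle=1$ yields $\langle u\xi,\beta\rangle=\langle\xi,R(\langle\,\cdot\,,\beta\rangle\alpha)^*\gamma\rangle$ with $R(\langle\,\cdot\,,\beta\rangle\alpha)^*\gamma\in s$. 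Thus for every $\beta\in s$ there is a vector in $s$ representing the functional $\xi\mapsto\langle u\xi,\beta\rangle$, which is exactly $s\subset\mathcal{D}(u^*)$ and $u^*(s)\subset s$; by Proposition \ref{prop_lstars=lslds} this means $u\in\lstars=\lslds$. Finally, $R=R_u$ follows with no further computation: $(R(x)-R_u(x))y=xL(y)-xL_u(y)=0$ for all $y\in\cS$, so essentiality forces $R(x)=R_u(x)$.

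It remains to prove that $\widetilde\varrho$ is a homeomorphism, and the same rank-one computations give both estimates. For continuity, given a bounded $M\subset\cS$ and $n\in\N_0$, the set $B_0:=\{x\zeta:x\in M,\ \zeta\in U_n^\circ\}$ is bounded in $s$ (hypocontinuity of the evaluation $\cS\times s'\to s$), whence $\sup_{x\in M}\|ux\|_n\leqslant\sup_{w\in B_0}|uw|_n\leqslant q_{n,B_0}(u)$; for the $R$-part, rewriting $\langle x\widetilde u\zeta,\rho\rangle=\langle\zeta,u^*(x^*\rho)\rangle$ via Lemma \ref{lem_extension_of_op_lstars} and applying Cauchy--Schwarz gives $\sup_{x\in M}\|x\widetilde u\|_n\leqslant\sup_{w\in B_1}|u^*w|_n\leqslant q_{n,B_1}(u)$, where $B_1:=\{x^*\rho:x\in M,\ \rho\in U_n^\circ\}$ is bounded in $s$. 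For the inverse, for bounded $B\subset s$ I would use that $L(\langle\,\cdot\,,\eta_0\rangle\xi)=\langle\,\cdot\,,\eta_0\rangle(u\xi)$ and $R(\langle\,\cdot\,,\beta\rangle\alpha)=\langle\,\cdot\,,u^*\beta\rangle\alpha$ to get $|u\xi|_n=\|L(\langle\,\cdot\,,\eta_0\rangle\xi)\|_n/|\eta_0|_n$ and $|u^*\beta|_n=\|R(\langle\,\cdot\,,\beta\rangle\alpha)\|_n/|\alpha|_n$; combined with $\sup_{\xi\in U_n^\circ}|\widetilde u\xi|'_B=\sup_{\eta\in B}|u^*\eta|_n$ (established in the proof of Proposition \ref{prop_lstars=lslds_as_top_star_alg}), this dominates $q_{n,B}(u)$ by finitely many seminorms of $(L,R)$ evaluated on the bounded sets $\{\langle\,\cdot\,,\eta_0\rangle\xi:\xi\in B\}$ and $\{\langle\,\cdot\,,\beta\rangle\alpha:\beta\in B\}$. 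Since $\widetilde\varrho$ is then a topological isomorphism onto $\mathcal{DC}(\cS)$ and $\lslds$ is complete, $\mathcal{DC}(\cS)$ is a complete locally convex space and a topological ${}^*$-algebra.
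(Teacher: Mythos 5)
Your proposal is correct and follows essentially the same route as the paper: injectivity from the essentiality of $\cS$ in $\lslds$, surjectivity by reconstructing $u$ and $u^*$ from $L$ and $R$ applied to rank-one operators, and both continuity estimates via bounded sets of the form $\{x\xi\colon x\in M,\ \xi\in U_n^\circ\}$ and rank-one operators built from a bounded set $B\subset s$. The only cosmetic differences are that you fix a pair $(\eta_0,\zeta_0)$ with $\langle\zeta_0,\eta_0\rangle=1$ instead of a single unit vector $e$, deduce $R=R_u$ from $L=L_u$ by faithfulness rather than by direct computation, and sidestep the index shift coming from the continuity of the involution in the estimate for the $R$-part.
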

\bproof
Throughout the proof, for $\xi,\eta\in s$, $\xi\otimes\eta$ denotes the one-dimensional operator $\langle\cdot,\eta\rangle\xi\colon s'\to s$.

Clearly, for $u\in\lslds$, the left and right multiplication maps
$L_u,R_u\colon\cS\to\cS$ are well defined. Moreover, it is easy to see that $xL_u(y)=R_u(x)y$ for $x,y\in\cS$ and $u\in\lslds$.
Hence, $(L_u,R_u)\in\mathcal{DC}(\cS)$ for every $u\in\lslds$, that is, $\widetilde{\varrho}$ is well defined.

The proof of the fact that $\widetilde{\varrho}$ is a ${}^*$-algebra homomorphism is straightforward and $\widetilde{\varrho}$ is injective, because $\cS$ is an essential ideal in $\lslds$
(Proposition \ref{prop_ldss_faithful}). We will show that $\widetilde{\varrho}$ is surjective.

Let $(L,R)\in\mathcal{DC}(\cS)$ and fix $e\in s$ with $||e||_{\ell_2}=1$. We define a linear continuous map
(use Propositions \ref{prop_ldss_faithful} and \ref{prop_(L,R)_L(s',s)}) $u\colon s\to s$ by
\[u\xi:=L(\xi\otimes e)(e).\]
For $\xi,\eta\in s$ we have
\begin{align}\label{eq_uxi_eta}
\begin{split}
\langle u\xi,\eta\rangle&=\langle L(\xi\otimes e)(e),\eta\rangle=\langle L(\xi\otimes e)(e),(\eta\otimes e)(e)\rangle=\langle (e\otimes \eta)[L(\xi\otimes e)(e)],e\rangle\\
&=\langle [(e\otimes \eta)L(\xi\otimes e)](e),e\rangle=\langle [R(e\otimes \eta)(\xi\otimes e)](e),e\rangle=\langle R(e\otimes \eta)[(\xi\otimes e)(e)],e\rangle\\
&=\langle R(e\otimes \eta)(\xi),e\rangle=\langle \xi,(R(e\otimes \eta))^*(e)\rangle.
\end{split}
\end{align}
This means that $u^*\eta=(R(e\otimes \eta))^*(e)\in s$ for $\eta\in s$. Hence, $s\subset\mathcal{D}(u^*)$ and $u^*(s)\subset s$.
Consequently, $u\in\cL^*(s)$, and thus, by Proposition \ref{prop_lstars=lslds}, $u$ has the continuous extension $\widetilde u\colon s'\to s'$.

By Propositions \ref{prop_ldss_faithful} and \ref{prop_(L,R)_L(s',s)}, for $\zeta\in s$ we obtain
\begin{align*}
L_u(\xi\otimes\eta)(\zeta)&=(u\xi\otimes\eta)(\zeta)=[L(\xi\otimes e)(e)\otimes\eta](\zeta)=\langle\zeta,\eta\rangle L(\xi\otimes e)(e)\\
&=L(\xi\otimes e)(\langle\zeta,\eta\rangle e)=L(\xi\otimes e)[(e\otimes\eta)(\zeta)]=[L(\xi\otimes e)(e\otimes\eta)](\zeta)\\
&=L((\xi\otimes e)(e\otimes\eta))(\zeta)=L(\xi\otimes\eta)(\zeta),
\end{align*}
hence $L_u(\xi\otimes\eta)=L(\xi\otimes\eta)$. Since $\{\xi\otimes\eta:\xi,\eta\in s\}$ is linearly dense in $\cS$, it follows that $L_u=L$.
Likewise, (\ref{eq_uxi_eta}) implies that, for $\zeta\in s$,
\begin{align*}
R_u(\xi\otimes\eta)(\zeta)&=[(\xi\otimes\eta)\widetilde u](\zeta)=\langle u\zeta,\eta\rangle\xi=\langle R(e\otimes \eta)(\zeta),e\rangle\xi
=(\xi\otimes e)((R(e\otimes \eta)(\zeta))=\\
&=[(\xi\otimes e)R(e\otimes \eta)](\zeta)=R((\xi\otimes e)(e\otimes \eta))(\zeta)=R(\xi\otimes\eta)(\zeta),
\end{align*}
and therefore $R_u=R$. Hence $\widetilde{\varrho}(u)=(L_u,R_u)=(L,R)$, and thus $\widetilde{\varrho}$ is surjective.


Next, we shall prove that $\widetilde{\varrho}$ is continuous. Let $M$ be a bounded subset of $\cS$ and let $n\in\N_0$.
Since the involution on $\cS$ is continuous (see \cite[p. 148]{TC}), the set $M^*$ is bounded and there are $C>0$, $k\geqslant n$ such that $||y^*||_n\leqslant C||y||_k$ for all $y\in\cS$.
Define
\begin{align*}
B_1&:=\{x\xi\colon x\in M, \xi\in s', |\xi|_{-n}\leqslant1\},\\
B_2&:=\{x^*\xi\colon x\in M, \xi\in s', |\xi|_{-k}\leqslant1\}.
\end{align*}
Then, for all $m\geqslant k$, we have
\begin{align*}
\sup\{|\eta|_m\colon \eta\in B_1\}&\leqslant\sup_{x\in M}||x||_m<\infty,\\
\sup\{|\eta|_m\colon \eta\in B_2\}&\leqslant\sup_{x\in M^*}||x||_m<\infty,
\end{align*}
and therefore $B_1$ and $B_2$ are bounded subsets of $s$.
Now,
\[\sup_{x\in M}||L_u(x)||_n=\sup_{x\in M}||ux||_n=\sup_{x\in M}\sup_{|\xi|_{-n}\leqslant1}|u(x\xi)|_n=\sup_{\eta\in B_1}|u(\eta)|_n\leqslant q_{n,B_1}(u).\]
Moreover, by Lemma \ref{lem_extension_of_op_lstars}, $(x\widetilde{u})^*=u^*x^*$ for $x\in\cS$ and $u\in\lslds$, and thus
\begin{align*}
\sup_{x\in M}||R_u(x)||_n&=\sup_{x\in M}||x\widetilde{u}||_n\leqslant C\sup_{x\in M}||(x\widetilde{u})^*||_k=C\sup_{x\in M}||u^*x^*||_k=\sup_{x\in M}\sup_{|\xi|_{-k}\leqslant1}|u^*(x^*\xi)|_k\\
&=\sup_{\eta\in B_2}|u^*\eta|_k\leqslant p_{k,B_2}(u)=q_{k,B_2}(u),
\end{align*}
where the last identity follows from (\ref{eq_pnB=qnB}). Consequently,
\[\max\bigg\{\sup_{x\in M}||L_u(x)||_n,\sup_{x\in M}||R_u(x)||_n\bigg\}\leqslant\max\{q_{n,B_1}(u),q_{k,B_2}(u)\}\leqslant q_{k,B_1\cup B_2}(u),\]
and thus $\widetilde{\varrho}$ is continuous.

Finally, we show that the inverse of $\widetilde{\varrho}$ is continuous. Let us take a bounded subset $B$ of $s$ and $n\in\N_0$. Define
\[M:=\{x_\xi\colon\xi\in B\setminus\{0\}\}\cup\{\mathbf{0}\},\]
where $x_\xi:=||\xi||_{\ell_2}^{-1}\xi\otimes\xi$ and $\mathbf{0}$ is the zero operator in $\cS$. For all $m\in\N_0$, we have
\[\sup_{x\in M}||x||_m=\sup_{\xi\in B\setminus\{0\}}||\xi||_{\ell_2}^{-1}|\xi|_m^2\leqslant\sup_{\xi\in B\setminus\{0\}}|\xi|_{2m}<\infty,\]
where the first inequality follows from the Cauchy-Schwartz inequality and the last one is a consequence of the boundedness of the set $B$. Hence $M$ is a bounded subset of $\cS$.

Let
\[B':=\{x\xi\colon x\in M, \xi\in s',|\xi|_{-n}\leqslant1\}.\]
Clearly, $0\in B'$. If $\xi\in B\setminus\{0\}$, then $\xi=x_\xi(||\xi||_{\ell_2}^{-1}\xi)$ and $\big|\,||\xi||_{\ell_2}^{-1}\xi\big|_{-n}\leqslant1$, hence $\xi\in B'$. Consequently, $B\subset B'$.
Again by identity (\ref{eq_pnB=qnB}), we get, for all $u\in\lslds$,
\begin{align*}
q_{n,B}(u)&=p_{n,B}(u)=\max\bigg\{\sup_{\eta\in B}|u\eta|_n,\sup_{\eta\in B}|u^*\eta|_n\bigg\}\leqslant\max\bigg\{\sup_{\eta\in B'}|u\eta|_n,\sup_{\eta\in B'}|u^*\eta|_n\}\bigg\}\\
&=\max\bigg\{\sup_{x\in M}\sup_{|\xi|_{-n}\leqslant1}|u(x\xi)|_n,\sup_{x\in M}\sup_{|\xi|_{-n}\leqslant1}|u^*(x\xi)|_n\bigg\}=\max\bigg\{\sup_{x\in M}||ux||_n,\sup_{x\in M}||u^*x^*||_n\bigg\}\\
&\max\bigg\{\sup_{x\in M}||ux||_n,\sup_{x\in M}||x\widetilde{u}||_n\bigg\}=\max\bigg\{\sup_{x\in M}||L_u(x)||_n,\sup_{x\in M}||R_u(x)||_n\bigg\},
\end{align*}
and therefore ${\widetilde{\varrho}}^{-1}$ is continuous.
\bqed

\section{Topological properties of the multiplier algebra}

We start by showing how the multiplier algebra of $\cS$ can be realized as a matrix algebra. Before that we make rather easy but very efficient observation.

\begin{Prop}
\label{complemented}
The space $\cL(s)\cap\cL(s')$ is isomorphic as a locally convex space to a complemented subspace of $\cL(s)\times\cL(s')$.
\end{Prop}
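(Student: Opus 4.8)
The plan is to realize $\lslds$ as the range of a continuous linear projection on $\ls\times\lds$. First I would introduce the diagonal embedding
\[
\iota\colon\lslds\to\ls\times\lds,\qquad \iota(x):=(x,\widetilde x),
\]
where $\widetilde x\in\lds$ is the extension from the definition of $\lslds$. By the very definition of $\tau_{\lslds}=\tau_{\ls}\cap\tau_{\lds}$, the seminorms $q_{n,B}$ from \eqref{qnB} are exactly the maxima of an $\ls$-seminorm of $x$ and an $\lds$-seminorm of $\widetilde x$; hence $\iota$ is a topological isomorphism onto its image, and it only remains to show that $\iota(\lslds)$ is complemented.

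To build the projection I would pass to the matrix model \eqref{matrix-representation}, under which $\ls\cong\Lambda(\cB)$ and $\lds\cong\Lambda(\cB')$ via $x\mapsto(\langle xe_j,e_i\rangle)_{i,j}$. Since $\widetilde x$ extends $x$ and $e_j\in s$, the operators $x$ and $\widetilde x$ share the same matrix $m$, so $\iota(\lslds)$ corresponds to the diagonal $D:=\{(m,m):m\in\Lambda(\cB)\cap\Lambda(\cB')\}$. Writing $L:=\{(i,j):i\geq j\}$ and $U:=\{(i,j):i<j\}$, I would define
\[
P\colon\Lambda(\cB)\times\Lambda(\cB')\to\Lambda(\cB)\times\Lambda(\cB'),\qquad P(m,m'):=(a,a),\quad a_{ij}:=\begin{cases}m_{ij},&(i,j)\in L,\\ m'_{ij},&(i,j)\in U.\end{cases}
\]
One checks directly that $P(a,a)=(a,a)$ for every $(a,a)\in D$, so $P^2=P$ and the range of $P$ is exactly $D\cong\lslds$; thus $P$ is the desired projection once its well-definedness and continuity are established.

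The heart of the matter is continuity, and here the obstacle is the projective--inductive (PLB) topology: the single-step weighted sup-norms are \emph{not} continuous seminorms, so a naive estimate of a step-norm of $a$ against step-norms of $m,m'$ does not by itself give continuity. The point that saves the argument is the triangular structure of $\cA=\max\{\cB,\cB'\}$. For $(i,j)\in U$ one has $i^N/j^n\leq j^N/i^n$, and for $(i,j)\in L$ one has $j^N/i^n\leq i^N/j^n$. Consequently the ``cross'' maps $m'\mapsto m'\mathbf 1_U$ (from $\Lambda(\cB')$ into $\Lambda(\cB)$) and $m\mapsto m\mathbf 1_L$ (from $\Lambda(\cB)$ into $\Lambda(\cB')$) are contractions between the corresponding Banach steps \emph{with the same indices} $N,n$, while the coordinate projections $m\mapsto m\mathbf 1_L$ and $m'\mapsto m'\mathbf 1_U$ are contractions on their own steps. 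A family of step-wise contractions that respects the linking maps and keeps $(N,n)$ fixed induces a continuous map of PLB-spaces (continuity is checked coordinate-by-coordinate in the projective limit, and on each inductive step it is the composition of a contraction with a canonical inclusion); this argument uses only the PLB structure and is insensitive to the normalization issue noted in the Remark after \eqref{operator-representation}. It follows that both coordinates of $P$ are continuous, that $a\in\Lambda(\cB)\cap\Lambda(\cB')$, and that $P$ is a continuous projection with $\operatorname{ran}P=D$. Combined with the embedding $\iota$, this shows $\lslds\cong D=\operatorname{ran}P$ is complemented in $\ls\times\lds$. The only delicate step, which I would write out in full, is exactly this verification that the weight-dominated, index-preserving step maps assemble into a continuous PLB-map; everything else is routine.
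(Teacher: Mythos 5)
Your construction is the same as the paper's --- the diagonal identification $x\mapsto(x,\widetilde x)$, the passage to $\Lambda(\cB)\times\Lambda(\cB')$, and the triangular projection $P$ --- but the two proofs diverge at the only nontrivial point, the continuity of $P$, and there your route is genuinely different. The paper argues softly: it shows $\Lambda(\cB)\times\Lambda(\cB')$ is webbed and ultrabornological (the latter via $\cL(s)\cong s\widehat{\otimes}s'$ and a result of Grothendieck) and then invokes the Closed Graph Theorem. You instead prove continuity by hand: since $b_{ij;N,n}\leqslant b'_{ij;N,n}$ on $\{i<j\}$ and $b'_{ij;N,n}\leqslant b_{ij;N,n}$ on $\{i\geqslant j\}$, the cross maps are contractions between the Banach steps \emph{with the same indices} $(N,n)$, and such index-preserving, linking-map-compatible families of step maps induce continuous maps of $\mathrm{PLB}$-spaces by the universal properties of the inductive and projective limits. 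This is correct, more elementary (no ultrabornologicality or web is needed), and more informative, since it yields explicit seminorm estimates and shows directly that the first coordinate of $P(m,m')$ lands in $\Lambda(\cB)\cap\Lambda(\cB')$; the price is writing out the routine $\mathrm{PLB}$ verification you flag, which is indeed standard. One further point in your favor: your assignment of the triangles is the right one. For $x\in\Lambda(\cB)$ the entries are controlled by $j^n/i^N$, which dominates $1/a_{ij;N,n}$ only on $\{i\geqslant j\}$; the formula for $M(x,y)$ printed in the paper (taking $x_{ij}$ for $i\leqslant j$ and $y_{ij}$ for $i>j$) has the two cases interchanged and, as written, need not produce an element of $\Lambda(\cB)\cap\Lambda(\cB')$ --- take $x_{ij}=\delta_{i1}\,j$ and $y=0$. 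That is evidently a typo in the paper, and your version is the intended, correct one.
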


\bproof
We use matrix representations \eqref{matrix-representation}. If $x=(x_{ij})_{i,j\in\N}\in\Lambda(\cB)$ and $y=(y_{ij})_{i,j\in\N}\in\Lambda(\cB')$ then we denote $M(x,y)\in\C^{\N\times\N}$,
\[[M(x,y)]_{ij}:=\begin{cases}
           x_{ij},\,\,i\leqslant j \\
           y_{ij},\,\,i>j,
          \end{cases}\]
and define a map $P\c\Lambda(\cB)\times\Lambda(\cB')\to\Lambda(\cB)\times\Lambda(\cB')$ by
\[P(x,y):=(M(x,y),M(x,y)).\]
It is easily seen that $P$ is a projection with
\[\operatorname{im}P=\Delta(\Lambda(\cB)\times\Lambda(\cB')):=\{(x,x)\in\Lambda(\cB)\times\Lambda(\cB')\c x\in\Lambda(\cB)\cap\Lambda(\cB')\}.\]

To get continuity, observe first that $\Lambda(\cB)$ and $\Lambda(\cB')$ are webbed by \cite[Lemma 24.28]{MV}. Moreover, $\cL(s)\cong\cL(s')\cong s\otimes s'$ by \cite[\S43, 3.(7)]{K2},
and therefore $\cL(s)$ and $\cL(s')$ are ultrabornological by \cite[Ch. II, Prop. 15 \& Cor. 2]{AG1} (see also \cite{LW} for a homological proof of this fact).
Since $\Lambda(\cB)\cong\cL(s)$ and $\Lambda(\cB')\cong\cL(s')$, $\Lambda(\cB)$ and $\Lambda(\cB')$ are ultrabornological, as well.
This implies that $\Lambda(\cB)\times\Lambda(\cB')$ is ultrabornological by \cite[Ch. II, 8.2, Cor. 1]{HSch} and has a web by \cite[\S35, 4.(6)]{K2}.
Continuity of $P$ follows now by the Closed Graph Theorem \cite[Th. 24.31]{MV}.

Now, let us consider the map
\[\Phi\colon\ls\times\lds\to\Lambda(\cB)\times\Lambda(\cB'),\quad \Phi(x,y):=(\Phi_1(x),\Phi_2(y)),\]
where $\Phi_1\colon\ls\to\Lambda(\cB)$, $\Phi_1\colon\lds\to\Lambda(\cB')$ are isomorphisms given by $x\mapsto(\langle xe_j,e_i\rangle)_{i,j\in\N}$.
Clearly $\Phi$ is an isomorphism of locally convex spaces and
\[\Phi(\Delta(\ls\times\lds))=\Delta(\Lambda(\cB)\times\Lambda(\cB')),\]
where
\[\Delta(\ls\times\lds):=\{(x,\widetilde{x})\in\ls\times\lds\colon x\in\lslds\}.\]
This shows that the map $\Phi^{-1}P\Phi$ is a continuous projection onto $\Delta(\ls\times\lds)$.
Finally, by comparing fundamental systems of seminorms on $\lslds$ and $\Delta(\ls\times\lds)$, we see that the map
\[\Psi\colon\lslds\to\Delta(\ls\times\lds),\quad \Psi(x):=(x,\widetilde{x})\]
is an isomorphisms, and thus $\lslds$ is isomorphic to $\Delta(\ls\times\lds)$, a complemented subspace of $\ls\times\lds$.
\bqed

\begin{Cor}\label{cor_nuclear_ultraborn_PLS}
The space $\lslds$ is a nuclear, ultrabornological $\mathrm{PLS}$-space.
\end{Cor}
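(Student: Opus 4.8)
The plan is to deduce all three properties at once from Proposition \ref{complemented}, which exhibits $\lslds$ as (isomorphic to) a complemented subspace of the finite product $\ls\times\lds$. The strategy rests on the fact that each of the three properties in question---nuclearity, being ultrabornological, and being a $\mathrm{PLS}$-space---is stable under forming finite products and under passing to complemented subspaces. Granting this, it suffices to check that the two factors $\ls$ and $\lds$ enjoy all three properties.

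For the factors I would argue as follows. The $\mathrm{PLS}$-property is already recorded in \eqref{operator-representation} (modulo the harmless adjustment mentioned in the Remark following it), so $\ls$ and $\lds$ are $\mathrm{PLS}$-spaces. That they are ultrabornological was in fact established in the course of proving Proposition \ref{complemented}: there one uses $\ls\cong\lds\cong s\widehat{\otimes}s'$ together with \cite[Ch. II, Prop. 15 \& Cor. 2]{AG1}. The same tensor-product identification gives nuclearity, since $s$ and $s'$ are nuclear (\cite[Prop. 28.16]{MV}) and the completed tensor product of nuclear spaces is nuclear; as the injective and projective topologies coincide here, there is no ambiguity in $s\widehat{\otimes}s'$.

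It then remains to invoke the permanence properties. Finite products of nuclear (resp. ultrabornological, resp. $\mathrm{PLS}$-) spaces are again of the same type, so $\ls\times\lds$ is a nuclear, ultrabornological $\mathrm{PLS}$-space. For the passage to the complemented subspace $\lslds$: nuclearity passes to arbitrary subspaces; the $\mathrm{PLS}$-property passes to closed subspaces by the structure theory of $\mathrm{PLS}$-spaces (\cite{PD-PLS}), and a complemented subspace is in particular closed; and the property of being ultrabornological passes to quotients of ultrabornological spaces by closed subspaces (these being inductive limits of Banach spaces), hence to complemented subspaces, which are isomorphic to such quotients. Combining these permanence facts with Proposition \ref{complemented} yields the corollary.

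I expect the only genuinely delicate point to be the stability of the $\mathrm{PLS}$-property (and, to a lesser extent, the ultrabornological property) under the passage to a complemented subspace; unlike nuclearity these do not follow from a one-line subspace argument but require the permanence results for $\mathrm{PLS}$- and ultrabornological spaces cited above. Everything else is formal, which is precisely why Proposition \ref{complemented} was set up beforehand as the \emph{efficient observation}.
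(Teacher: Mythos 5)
Your proposal is correct and follows essentially the same route as the paper: the paper likewise deduces the corollary from Proposition \ref{complemented} by noting that $\ls$ and $\lds$ are nuclear, ultrabornological $\mathrm{PLS}$-spaces, that these properties are stable under finite products, and that they are inherited by complemented subspaces (citing \cite[Prop. 28.6 \& 28.7]{MV}, \cite[Cor. 6.2.14]{PCB}, \cite[Ch. II, 8.2, Cor. 1]{HSch} and \cite[Prop. 1.2]{DV} for these permanence facts). Your additional justifications of the permanence properties are sound and only make explicit what the paper leaves to the references.
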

\bproof
$\ls$ and $\lds$ are nuclear, ultrabornological $\mathrm{PLS}$-spaces, so is $\ls\times\lds$ as their product (see \cite[Prop. 28.7]{MV}, \cite[Cor. 6.2.14]{PCB}). The desired properties are inherited by complemented subspaces (see \cite[Prop. 28.6]{MV}, \cite[Ch. II, 8.2, Cor. 1]{HSch}, \cite[Prop. 1.2]{DV}), and thus, by Proposition \ref{complemented}, the proof is complete.
\bqed




Let $\cA$ be as in (\ref{plb-matrix}), that is,
\[\cA:=(a_{ij,N,n})_{i,j\in\N,N,n\in\N_0},\hspace{1cm}a_{ij,N,n}:=\max\bigg\{\frac{i^N}{j^n},\frac{j^N}{i^n}\bigg\}.\]

\begin{Prop}\label{prop_lslds=LambdaA}
We have $\lslds\cong\Lambda(\cA)$ as topological ${}^*$-algebras.
\end{Prop}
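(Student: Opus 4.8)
The plan is to exhibit the explicit matrix map and reduce everything to a single set-theoretic identity together with one invocation of the closed graph theorem. Define $\Psi\colon\lslds\to\C^{\N^2}$ by $\Psi(x):=(\langle xe_j,e_i\rangle)_{i,j\in\N}$. For $x\in\lslds$ with extension $\widetilde{x}\in\lds$ one has $\widetilde{x}e_j=xe_j$ for all $j\in\N$, so this matrix coincides both with the image of $x$ under the isomorphism $\cL(s)\cong\Lambda(\cB)$ and with the image of $\widetilde{x}$ under $\cL(s')\cong\Lambda(\cB')$ from \eqref{matrix-representation}; in particular $\Psi$ is well defined. The goal is to show that $\Psi$ maps $\lslds$ bijectively onto $\Lambda(\cA)$, that it respects the $^*$-algebra operations, and that it is a homeomorphism.

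First I would prove the underlying identity $\Lambda(\cA)=\Lambda(\cB)\cap\Lambda(\cB')$ as sets. The inclusion $\Lambda(\cA)\subseteq\Lambda(\cB)\cap\Lambda(\cB')$ is immediate from $b_{ij;N,n}\leqslant a_{ij;N,n}$ and $b'_{ij;N,n}\leqslant a_{ij;N,n}$ (see \eqref{plb-matrices}, \eqref{plb-matrix}). For the reverse inclusion, take a matrix lying in both spaces, fix $N\in\N_0$, choose $n_1,n_2\in\N_0$ witnessing membership in $\Lambda(\cB)$ and $\Lambda(\cB')$ for this $N$, and put $n:=\max\{n_1,n_2\}$. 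Since $i,j\geqslant1$, passing from $n_1$ (resp.\ $n_2$) to the larger exponent $n$ only diminishes the relevant weights, so both $\sup_{i,j}|x_{ij}|\,i^N/j^n$ and $\sup_{i,j}|x_{ij}|\,j^N/i^n$ remain finite, whence $\sup_{i,j}|x_{ij}|\,a_{ij;N,n}=\max\{\sup_{i,j}|x_{ij}|\,i^N/j^n,\ \sup_{i,j}|x_{ij}|\,j^N/i^n\}<\infty$. Combined with the matrix descriptions \eqref{matrix-representation}, this gives that $\Psi$ is a bijection of $\lslds$ onto $\Lambda(\cA)$: injectivity is clear, while surjectivity holds because a matrix in $\Lambda(\cA)$ simultaneously defines an operator in $\cL(s)$ and one in $\cL(s')$ that agree on the basis, i.e.\ an element of $\lslds$. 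That $\Psi$ is a $^*$-homomorphism is a routine verification: expanding $ye_k=\sum_j\langle ye_k,e_j\rangle e_j$ in $s$ and using continuity of $x$ shows that operator composition corresponds to matrix multiplication, while the involution of $\lstars$ corresponds to the conjugate transpose.

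The topological part is where I expect the real work, and I would arrange matters so that only the easy direction is checked by hand. The topology $\tau_{\lslds}$ is the initial topology for the two inclusions $\lslds\hookrightarrow\ls$ and $\lslds\hookrightarrow\lds$, so a linear map into $\lslds$ is continuous precisely when its two compositions with these inclusions are. Under the isomorphisms of \eqref{matrix-representation} these compositions are the canonical inclusions $\Lambda(\cA)\hookrightarrow\Lambda(\cB)$ and $\Lambda(\cA)\hookrightarrow\Lambda(\cB')$, which are continuous because the weights of $\cA$ dominate those of $\cB$ and $\cB'$ at each level $N,n$. Hence $\Psi^{-1}\colon\Lambda(\cA)\to\lslds$ is a continuous linear bijection.

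It then remains to upgrade this to a topological isomorphism, and here the structural results proved earlier do all the work. Since $\Psi^{-1}$ is continuous and both spaces are Hausdorff, the graph of $\Psi$ is closed. Now $\lslds$ is ultrabornological by Corollary \ref{cor_nuclear_ultraborn_PLS}, while $\Lambda(\cA)$, being a K\"othe-type $\mathrm{PLB}$-space (one checks directly that $\cA$ satisfies conditions (i)--(ii)), is webbed by \cite[Lemma 24.28]{MV}; therefore the Closed Graph Theorem \cite[Th. 24.31]{MV} forces $\Psi$ to be continuous. Consequently $\Psi$ is an isomorphism of locally convex spaces and of $^*$-algebras, which is exactly the assertion $\lslds\cong\Lambda(\cA)$. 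The main obstacle is genuinely the continuity of $\Psi$ itself: a direct seminorm estimate is awkward because the continuous seminorms on the $\mathrm{PLB}$-space $\Lambda(\cA)$ involve its inductive-limit steps, so the clean route is to verify continuity only for the easy map $\Psi^{-1}$ and let the ultrabornological/webbed dichotomy supply continuity of $\Psi$.
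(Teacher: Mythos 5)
Your proposal is correct and follows essentially the same route as the paper: both verify the easy continuity (that of the map $\Lambda(\cA)\to\lslds$, via the embeddings into $\cL(s)$ and $\cL(s')$) by hand and obtain the hard direction from the webbed/ultrabornological machinery resting on Corollary \ref{cor_nuclear_ultraborn_PLS}. The only difference is cosmetic: the paper applies the Open Mapping Theorem to $T\colon\Lambda(\cA)\to\lslds$ where you apply the Closed Graph Theorem to $T^{-1}$, and you spell out the set identity $\Lambda(\cA)=\Lambda(\cB)\cap\Lambda(\cB')$ and the $^*$-isomorphism claim in more detail than the paper does.
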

\bproof
The map
\[T\c\Lambda(\cA)\to\lslds,\,\,\,\,\,\,\,\,\langle(Tx)e_j,e_i\rangle:=x_{ij}\]
is a $^*$-algebra isomorphism. To see it is continuous, observe that the embeddings $\Lambda(\cA)\hk\cL(s)$ and $\Lambda(\cA)\hk\cL(s')$ are continuous. Continuity of $T^{-1}$ follows from the Open Mapping Theorem \cite[24.30]{MV} since
$\Lambda(\cA)$ -- as a $\mathrm{PLS}$-space --  has a web and $\lslds$ is ultrabornological by Corollary \ref{cor_nuclear_ultraborn_PLS}.
\bqed

Combining Proposition \ref{prop_lstars=lslds_as_top_star_alg}, Theorem \ref{th_Ls=DC} and Proposition \ref{prop_lslds=LambdaA} we obtain the following.

\begin{Cor}\label{cor_DC=lstars=lslds=LambdaA}
We have $\mathcal{DC}(\cS)\cong\lstars=\lslds\cong\Lambda(\cA)$ as topological ${}^*$-algebras.
\end{Cor}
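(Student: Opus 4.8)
The plan is to obtain the corollary simply by composing the three isomorphisms already at our disposal, using that the class of isomorphisms of topological ${}^*$-algebras is closed under composition (and inversion). First I would invoke Proposition \ref{prop_lstars=lslds_as_top_star_alg}, which asserts the \emph{equality} $\lstars=\lslds$ as locally convex spaces and as ${}^*$-algebras. Since this is a genuine identity of the two descriptions---the same underlying set, the same algebraic operations, the same topology---it contributes no map to be constructed or checked, only an identification of the two middle terms of the chain.

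Next I would bring in Theorem \ref{th_Ls=DC}, which furnishes $\widetilde{\varrho}\colon\lslds\to\mathcal{DC}(\cS)$, $u\mapsto(L_u,R_u)$, and proves it to be an isomorphism of locally convex spaces and of ${}^*$-algebras; combined with the equality above this already yields $\mathcal{DC}(\cS)\cong\lstars=\lslds$ as topological ${}^*$-algebras. Finally I would append Proposition \ref{prop_lslds=LambdaA}, whose map $T\colon\Lambda(\cA)\to\lslds$ is a ${}^*$-algebra isomorphism with $T$ and $T^{-1}$ continuous, giving $\lslds\cong\Lambda(\cA)$ as topological ${}^*$-algebras. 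Splicing these together produces the single chain $\mathcal{DC}(\cS)\cong\lstars=\lslds\cong\Lambda(\cA)$ claimed in the statement.

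There is no real obstacle here; the content lives entirely in the three cited results. The only point meriting (routine) attention is that each of them certifies its map to be at once linear, multiplicative, ${}^*$-preserving, bijective, and bicontinuous, so that the composite automatically inherits all of these properties. It is also worth noting that because the middle node is an honest equality rather than a mere isomorphism, the chain accumulates no auxiliary identifications to be tracked, and the two remaining arrows compose cleanly into one topological ${}^*$-algebra isomorphism.
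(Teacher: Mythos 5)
Your proposal is correct and coincides with the paper's own argument: the corollary is obtained there by exactly the same combination of Proposition \ref{prop_lstars=lslds_as_top_star_alg}, Theorem \ref{th_Ls=DC} and Proposition \ref{prop_lslds=LambdaA}, composed as isomorphisms of topological ${}^*$-algebras. Nothing further is needed.
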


From now on, by $\mathcal{ML}(s',s)$ we denote any topological ${}^*$-algebra isomorphic to $\mathcal{DC}(\cS)$ and we call it the \textit{multiplier algebra of the noncommutative Schwartz space}.

We have just shown that $\mathcal{ML}(s',s)$ is webbed and ultrabornological. By \cite[Th. 4.2]{V-proj} this last property is equivalent to barrelledness. Therefore by \cite[Ths. 24.30, 24.31]{MV} and \cite[Prop. 4.1.3]{PCB} all the classical functional analytic tools are available for $\mathcal{ML}(s',s)$. For convenience we state this result separately.

\begin{Th}
\label{classical-fa}
Let $X=\mathcal{ML}(s',s)$ be the multiplier algebra of the noncommutative Schwartz space and let $E,F,G$ be a locally convex spaces with $E$ webbed and $F$ ultrabornological. The following hold:
\begin{enumerate}[\upshape(1)]
\item Uniform Boundedness Principle: every pointwise bounded set $B\subset\mathcal{L}(X,G)$ is equicontinuous.
\item Closed Graph Theorem: every linear map $T\c X\to E$ and $S\c F\to X$ with closed graph is continuous.
\item Open Mapping Theorem: every continuous linear surjection $T\c E\to X$ and $S\c X\to F$ is open.
\end{enumerate}
\end{Th}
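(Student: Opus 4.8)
The plan is to treat this statement as essentially a bookkeeping corollary of the two structural facts already established for $X=\mathcal{ML}(s',s)$, so that no new analysis on $X$ itself is needed. By Corollary \ref{cor_nuclear_ultraborn_PLS} and Proposition \ref{prop_lslds=LambdaA}, $X$ is both webbed (being a nuclear $\mathrm{PLS}$-space, hence possessing a web) and ultrabornological, and by \cite[Th. 4.2]{V-proj} the latter is here equivalent to barrelledness. The three classical principles then follow by invoking the De Wilde machinery — the closed graph and open mapping theorems for webbed/ultrabornological pairs — together with the Banach--Steinhaus theorem for barrelled domains.

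Concretely, for (1) I would note that since $X$ is barrelled, the uniform boundedness principle \cite[Prop. 4.1.3]{PCB} applies with $X$ in the role of the (barrelled) domain: any pointwise bounded family $B\subset\mathcal{L}(X,G)$ is automatically equicontinuous for an arbitrary locally convex codomain $G$. For (2) and (3) the crucial observation is that $X$ can serve in \emph{either} slot of the De Wilde theorems, precisely because it is simultaneously webbed and ultrabornological. For the closed graph theorem \cite[Th. 24.31]{MV}: a linear map $T\c X\to E$ into a webbed space $E$ is continuous once $X$ is ultrabornological, while a linear map $S\c F\to X$ from an ultrabornological space $F$ is continuous because $X$ is webbed. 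Symmetrically, for the open mapping theorem \cite[Th. 24.30]{MV}: a continuous linear surjection $T\c E\to X$ from a webbed $E$ onto the ultrabornological $X$ is open, and a continuous linear surjection $S\c X\to F$ onto an ultrabornological $F$ is open since $X$ is webbed.

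The only point requiring any care — and it is a verification to be recorded rather than a genuine obstacle — is matching the hypotheses correctly in each of the four mapping directions: in every case exactly one of the two spaces must be webbed and the other ultrabornological, and the entire content of the statement is that $X$ enjoys \emph{both} properties, so that it fits whichever role the companion space $E$ or $F$ leaves open. Since all the heavy lifting (establishing that $X$ is a nuclear, ultrabornological $\mathrm{PLS}$-space, and therefore webbed and barrelled) was already carried out in Corollaries \ref{cor_nuclear_ultraborn_PLS} and \ref{cor_DC=lstars=lslds=LambdaA}, the proof reduces to citing the three general theorems and checking this pairing.
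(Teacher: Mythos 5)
Your proposal is correct and follows the paper's own argument essentially verbatim: the paper likewise derives the theorem from the facts that $X$ is webbed (as a $\mathrm{PLS}$-space) and ultrabornological (Corollary \ref{cor_nuclear_ultraborn_PLS}), hence barrelled by \cite[Th. 4.2]{V-proj}, and then cites \cite[Ths. 24.30, 24.31]{MV} and \cite[Prop. 4.1.3]{PCB}. Your explicit matching of the webbed/ultrabornological roles in the four mapping directions is exactly the (implicit) content of the paper's one-line proof.
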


We will now need a characterization of those Fr\'echet spaces which are $\mathrm{PLN}$-spaces. This characterization seems to be known for specialists however we were not able to find any reference to that result. For convenience of the reader we state it explicitly below.

\begin{Prop}
\label{strongly}
A Fr\'echet space is a $\mathrm{PLN}$-space if and only if it is strongly nuclear.
\end{Prop}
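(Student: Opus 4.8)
The plan is to reduce both implications to a single operator-theoretic dictionary for the linking maps of the local Banach spaces of $X$, and then to match this with the inductive steps of a $\mathrm{PLN}$-representation. Recall first that $X$ is strongly nuclear exactly when, for every continuous seminorm $p$, there is a larger continuous seminorm $q$ such that the canonical map $X_q\to X_p$ between the associated local Banach spaces is strongly nuclear; since strongly nuclear spaces carry a fundamental system of Hilbertian seminorms, I may realise these local spaces as Hilbert spaces and use that, in the Hilbert setting, an operator is strongly nuclear if and only if its sequence of singular numbers is rapidly decreasing (equivalently, it is $\ell_r$-nuclear for every $r>0$, i.e. $s$-nuclear). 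The crucial reformulation is then: \emph{a map $\rho\colon H\to K$ between Hilbert spaces has rapidly decreasing singular numbers if and only if it factors as an infinite product $\rho=\iota_1\iota_2\iota_3\cdots$ of dense nuclear inclusions of a Hilbert scale} $K=Z_0\hookleftarrow Z_1\hookleftarrow Z_2\hookleftarrow\cdots$. Indeed, writing the singular numbers as $s_j$, one can split $s_j=\prod_{n\ge1}t_{j,n}$ with each factor sequence $(t_{j,n})_j$ summable precisely when $(s_j)_j$ decays faster than every power, so that its logarithm distributes over infinitely many summable tails; the Schmidt decomposition of $\rho$ turns this numerical splitting into the desired chain of nuclear inclusions, and conversely an infinite product of nuclear maps forces the singular numbers of the composite to decay faster than any power. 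This dictionary is the engine for both directions.

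For the implication that a strongly nuclear Fréchet space $X$ is a $\mathrm{PLN}$-space, I would write $X=\opp_{N\in\N_0}H_N$ as a reduced projective limit of Hilbert spaces and, after passing to a subsequence of seminorms, arrange that each linking map $\rho_N\colon H_{N+1}\to H_N$ is strongly nuclear. Applying the splitting above to each $\rho_N$ produces, for every fixed $N$, a Hilbert scale of dense nuclear inclusions interpolating between $H_{N+1}$ and $H_N$; relabelling these interpolating spaces as the Banach steps $X_{N,n}$, I obtain for each $N$ a $\mathrm{DFN}$-step $\op_{n\in\N_0}X_{N,n}$ with nuclear inclusions $X_{N,n}\hk X_{N,n+1}$. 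It then remains to check that the double limit $\opp_{N\in\N_0}\op_{n\in\N_0}X_{N,n}$ recovers $X$ not only as a set but topologically, which amounts to verifying that the interpolating scales at consecutive levels are compatible with the projective linking maps $\rho_N$. This bookkeeping is routine once the scales are fixed, and yields the required $\mathrm{PLN}$-representation.

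For the converse, suppose $X$ is a Fréchet space and also a $\mathrm{PLN}$-space $X=\opp_{N}\op_{n}X_{N,n}$ with nuclear inclusions $X_{N,n}\hk X_{N,n+1}$. A $\mathrm{PLN}$-space is nuclear, so $X$ is certainly nuclear; the point is to upgrade this to strong nuclearity. Here I would use metrizability of $X$ to extract a cofinal sequence of continuous seminorms from the pullbacks $r\circ\pi_N$ (with $\pi_N\colon X\to\op_n X_{N,n}$ the canonical map and $r$ a step-norm), thereby identifying the local Banach spaces of the Fréchet structure with the steps $X_{N,n}$ up to equivalence. The decisive observation is that, for a fixed level $N$, the continuous seminorm $p$ coming from a step $X_{N,n_0}$ is dominated, after composition with the projective linking maps, by seminorms coming from deep levels $M$ that factor through the entire inductive ladder $X_{N,n_0}\hk X_{N,n_0+1}\hk\cdots$; letting $M$ grow forces the relevant local map $X_q\to X_p$ to factor through arbitrarily long, and ultimately infinite, compositions of the nuclear inclusions, so that its singular numbers are rapidly decreasing. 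By the dictionary of the first paragraph, $X_q\to X_p$ is strongly nuclear, and hence $X$ is strongly nuclear.

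I expect the main obstacle to be exactly the interplay between metrizability and the inductive ($\mathrm{DFN}$) steps in the converse direction: one must show that the Fréchet topology, although generated by only countably many seminorms, is genuinely forced to traverse infinitely many of the nuclear inclusions $X_{N,n}\hk X_{N,n+1}$ rather than stabilising after finitely many steps (which would yield only nuclearity, as the example of $s=\Lambda_\infty(\log j)$, which is nuclear but neither strongly nuclear nor a $\mathrm{PLN}$-space, already shows, since its polynomially decaying local links cannot be split into infinitely many summable factors). Making this precise requires a careful analysis of how the projective linking maps $\op_n X_{N+1,n}\to\op_n X_{N,n}$ act on bounded sets and of the resulting factorisation of the local maps, and it is here that the quantitative splitting of singular numbers into infinitely many summable factors must be invoked in its sharp, metrizable form.
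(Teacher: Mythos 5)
Your proposal takes a genuinely different, ``hands-on'' route from the paper, which instead disposes of both implications by universal embeddings and permanence properties: a strongly nuclear Fr\'echet space embeds topologically into $(s')^{\N}$, which is a $\mathrm{PLN}$-space, and being $\mathrm{PLN}$ passes to subspaces; conversely a $\mathrm{PLN}$-space embeds into a countable product of $\mathrm{LN}$-spaces, these are strongly nuclear, and strong nuclearity passes to countable products and subspaces. Your forward direction is essentially workable, but note that the chain you build points the wrong way for a $\mathrm{PLB}$-representation: you produce a \emph{decreasing} scale $H_N=Z_0\hookleftarrow Z_1\hookleftarrow\cdots$ below $H_N$, whereas the inner steps of a $\mathrm{PLN}$-space must form an \emph{increasing} chain $X_{N,0}\hk X_{N,1}\hk\cdots$ with nuclear inclusions; the correct interpolation starts at $H_{N+1}$ and climbs towards $H_N$ (in the Schmidt basis, take weights given by the tails $\prod_{n>m}t_{j,n}$ of your splitting $s_j=\prod_n t_{j,n}$). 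With that fixed, the identification $\opp_N\op_n X_{N,n}\cong X$ does follow from the interlacing $H_{N+1}\to\op_n X_{N,n}\to H_N$.

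The converse, however, has a genuine gap which your own last paragraph concedes. You look for strong nuclearity by passing to ``deep levels $M$'' of the \emph{projective} spectrum, but the projective linking maps $\op_n X_{M,n}\to\op_n X_{N,n}$ of a $\mathrm{PLN}$-space are merely continuous; all the nuclearity lives in the \emph{inductive} direction inside a fixed level $N$, and nothing in your sketch transfers it to the local Banach maps $X_q\to X_p$ of the Fr\'echet structure. The missing tool is the Grothendieck factorization theorem: since $X$ is Fr\'echet and each $Y_N=\op_n X_{N,n}$ is an $\mathrm{LB}$-space, the canonical map $X\to Y_N$ factors through a single step $X_{N,n_0}$. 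Then the pullback $q$ of $\|\cdot\|_{X_{N,n_0}}$ is one fixed continuous seminorm such that, for \emph{every} $k$, the map $X_q\to X_p$ (with $p$ pulled back from a continuous seminorm on $Y_N$) factors through the $k$-fold composition of nuclear inclusions $X_{N,n_0}\hk\cdots\hk X_{N,n_0+k}$; the multiplicativity of approximation numbers then gives $a_m(X_q\to X_p)=O(m^{-k})$ for all $k$, i.e.\ strong nuclearity. Without this factorization (or, as in the paper, without quoting that $\mathrm{LN}$-spaces are strongly nuclear and using permanence under products and subspaces), the argument does not close: an ``infinite composition'' of the inductive inclusions is not something the Fr\'echet seminorms of $X$ see directly, and the decisive point is precisely that the factoring step $n_0$ can be held fixed while $k$ grows.
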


\bproof
Let $X$ be a strongly nuclear Fr\'echet space. By \cite[21.8, Th. 8]{J}, $X$ is a topological subspace of $(s')^I$ for a countable set $I$. Since $s'$ is an $\mathrm{LN}$-space,
the product $(s')^I$ is a $\mathrm{PLN}$-space. By \cite[Prop. 1.2]{DV}, $X$ is a $\mathrm{PLN}$-space.

Conversely, every $\mathrm{PLN}$-space is a topological subspace of a countable product of $\mathrm{LN}$-spaces and these are strongly nuclear by \cite[21.8, Th. 6]{J}.
Consequently, by \cite[Props. 21.1.3 and 21.1.5]{J}, $X$ is strongly nuclear.
\bqed

\begin{Cor}
\label{topological}
The multiplier algebra of the noncommutative Schwartz space is not a $\mathrm{PLN}$-space.
\end{Cor}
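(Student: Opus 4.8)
The plan is to deduce this from Proposition~\ref{strongly} by passing to a suitable Fréchet subspace. Recall from Proposition~\ref{prop_lslds=LambdaA} (see also Corollary~\ref{cor_DC=lstars=lslds=LambdaA}) that $\mathcal{ML}(s',s)\cong\Lambda(\cA)$. The structural input I would use is that closed subspaces of $\mathrm{PLN}$-spaces are again $\mathrm{PLN}$-spaces (this is \cite[Prop.~1.2]{DV}, already invoked in the proof of Proposition~\ref{strongly}). Hence, if I can exhibit inside $\Lambda(\cA)$ a closed Fréchet subspace which fails to be strongly nuclear, then Proposition~\ref{strongly} forbids $\mathcal{ML}(s',s)$ from being a $\mathrm{PLN}$-space.

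First I would isolate such a subspace. Consider
\[Y:=\{x=(x_{ij})_{i,j\in\N}\in\Lambda(\cA)\colon x_{ij}=0\text{ whenever }j\neq1\},\]
i.e. the matrices supported on the first column. Since each coordinate functional $x\mapsto x_{ij}$ is continuous on $\Lambda(\cA)$, the set $Y$ is closed. Reading off the weights, $a_{i1;N,n}=\max\{i^N,i^{-n}\}=i^N$ (independent of $n$, because $i^N\geqslant1\geqslant i^{-n}$ for $i\geqslant1$, $N,n\in\N_0$), so the defining condition for membership in $Y$ collapses to $\forall N\in\N_0\colon\sup_i|x_{i1}|i^N<\infty$. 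Thus $x\mapsto(x_{i1})_i$ identifies $Y$, as a locally convex space, with $\Lambda^\infty((i^N))$, which by the $\Lambda^p$-coincidence (cf. Proposition~\ref{lp-equal}) is just the Fréchet space $s$. In operator terms these $x$ are the rank-one maps $\xi\mapsto\xi_1\cdot v$ with $v\in s$, which makes both the closedness and the identification $Y\cong s$ transparent.

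It then remains to invoke that $s$ is nuclear but \emph{not} strongly nuclear. Indeed $s$ is the power series space $\Lambda_\infty(\alpha)$ with $\alpha_j=\log j$, and a power series space of infinite type is strongly nuclear precisely when $\sum_j e^{-\delta\alpha_j}<\infty$ for every $\delta>0$; here $\sum_j e^{-\delta\log j}=\sum_j j^{-\delta}=\infty$ for $0<\delta\leqslant1$, so $s$ is not strongly nuclear (see also \cite{J}). Were $\mathcal{ML}(s',s)\cong\Lambda(\cA)$ a $\mathrm{PLN}$-space, its closed subspace $Y\cong s$ would be a $\mathrm{PLN}$-space by \cite[Prop.~1.2]{DV}, and being Fréchet it would be strongly nuclear by Proposition~\ref{strongly} --- a contradiction. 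Therefore $\mathcal{ML}(s',s)$ is not a $\mathrm{PLN}$-space.

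The main obstacle is the one genuinely non-formal point, namely confirming that the chosen Fréchet subspace is not strongly nuclear; everything else is bookkeeping with the matrix picture and the permanence of $\mathrm{PLN}$-spaces under closed subspaces. Concretely one must be certain that $s$ (equivalently, $Y$) really lies outside the strongly nuclear class --- it is the universal nuclear Fréchet space, whereas it is its dual $s'$, not $s$ itself, that is the universal strongly nuclear space --- and that the subspace topology induced by the $\mathrm{PLS}$-structure of $\Lambda(\cA)$ on $Y$ genuinely agrees with the Fréchet topology of $s$ rather than some coarser inductive-limit topology; the $n$-independence of the weights $a_{i1;N,n}$ is exactly what secures this.
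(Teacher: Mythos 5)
Your proof is correct and follows essentially the same route as the paper's: exhibit a closed copy of $s$ inside $\Lambda(\cA)$ (the paper uses the first row, you the first column --- identical by the symmetry of $\cA$), invoke the permanence of $\mathrm{PLN}$-spaces under closed subspaces together with Proposition~\ref{strongly}, and contradict the fact that $s$ is not strongly nuclear. The paper additionally observes that this copy of $s$ is even complemented in $\Lambda(\cA)$, which is the cleanest way to settle the induced-topology point you address at the end via the $n$-independence of the weights.
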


\bproof
Suppose that the multiplier algebra is a $\mathrm{PLN}$-space. Then so is $s$ as its closed subspace. In fact, $s$ is even complemented in $\Lambda(\cA)$ -- consider the projection in $\Lambda(\cA)$
which cancels all but first row-entries. By Proposition \ref{strongly},
$s$ is strongly nuclear which leads to a contradiction by \cite[21.8, Ex. 3]{J}.
\bqed


The multiplier algebra of the noncommutative Schwartz space satisfies another useful property. Recall from \cite[p. 433]{BD} that a $\mathrm{PLS}$-space $X$ is said to have {\it the dual interpolation estimate for big $\theta$} if

\begin{multline}
\forall\,N\,\,\exists\,M\,\,\forall\,K\,\,\exists\,n\,\,\forall\,m\,\,\exists\,\theta_0\in(0,1)\,\,\forall\theta\geqslant\theta_0\,\,\exists\,k,C>0\,\,\forall\,x'\in X'_N: \nonumber \\
||x'\circ\iota^M_N||^*_{M,m}\leqslant C(||x'\circ\iota^K_N||^*_{K,k})^{1-\theta}(||x'||^*_{N,n})^{\theta}. \nonumber
\label{def}
\end{multline}

If we take $\theta\leqslant\theta_0$ then $X$ has the {\it dual interpolation estimate for small $\theta$} and if we take $\theta\in(0,1)$ then $X$ has the {\it dual interpolation estimate for all $\theta$}. For K\"{o}the-type $\mathrm{PLS}$-spaces $\Lambda^p(B)$ it is enough -- see the proof of \cite[Th. 4.3]{BD2} -- to check the above condition for evaluation functionals $\phi_j(x):=x_j,\,j\in\N$. Examples of $\mathrm{PLS}$-spaces with this property can be found in \cite{BD1,BD,BD2}. Dual interpolation estimate plays an important role in partial differential equations, e.g. surjectivity of operators, existence of linear right inverses, parameter dependence of solutions -- see \cite{PD-pardep} for more details.

\begin{Prop}\label{prop_dual_interpolation}
The multiplier algebra $\cM\cS$ has the dual interpolation estimate for big $\theta$ but not for small $\theta$.
\end{Prop}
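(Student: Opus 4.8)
The plan is to transport everything to the matrix model $\cM\cS\cong\Lambda(\cA)$ from Corollary \ref{cor_DC=lstars=lslds=LambdaA} and to verify the estimate, as recalled just before the statement, only on the evaluation functionals $\phi_{ij}(x):=x_{ij}$, $(i,j)\in\N\times\N$. First I would record that in each step space $\ell^p(a_{\cdot;N,n})$ the dual norm of $\phi_{ij}$ is computed by testing against the unit vector $e_{ij}$, giving $\|\phi_{ij}\|^*_{N,n}=1/a_{ij;N,n}$ for every $1\le p\le\infty$ (this is where Proposition \ref{lp-equal} enters, making the choice of $p$ irrelevant). Since the linking maps of a K\"othe $\mathrm{PLB}$-space are the coordinate inclusions, one has $\phi_{ij}\circ\iota^M_N=\phi_{ij}$, so the whole estimate collapses to the scalar inequality
\[ a_{ij;K,k}^{\,1-\theta}\,a_{ij;N,n}^{\,\theta}\le C\,a_{ij;M,m}\qquad\text{for all }(i,j)\in\N\times\N. \]

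Next I would take logarithms and substitute $u:=\log i\ge0$, $v:=\log j\ge0$, so that $\log a_{ij;N,n}$ becomes the piecewise-linear function $\max\{Nu-nv,\,Nv-nu\}$. As $\cA$ is symmetric under $i\leftrightarrow j$ it suffices to work on the half-quadrant $u\ge v\ge0$, where each maximum is attained at its first argument and the inequality reduces to the linear bound $au-bv\le\log C$ with $a:=(1-\theta)K+\theta N-M$ and $b:=(1-\theta)k+\theta n-m$. A homogeneous linear form is bounded above on the cone $\{u\ge v\ge0\}$ precisely when it is $\le0$ on its two extreme rays $v=0$ and $u=v$; this yields the two conditions $a\le0$ (the ray $j=1$) and $a\le b$ (the diagonal $i=j$), and in that case $C=1$ suffices.

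For the positive part (big $\theta$) I would set $M:=N+1$. The diagonal condition $a\le b$, namely $(1-\theta)(K-k)+\theta(N-n)\le M-m$, is satisfied by taking the last-quantified index $k$ large enough (using $1-\theta>0$), so it is never an obstruction. The ray condition $a\le0$, i.e.\ $(1-\theta)K+\theta N\le M$, is automatic when $K\le M$ and, when $K>M$, holds exactly for $\theta\ge\theta_0:=1-\frac{M-N}{K-N}\in(0,1)$; because $\theta_0$ is chosen after $K$ in the quantifier order, this is precisely the freedom the big-$\theta$ estimate grants. Taking $C=1$ then closes this direction.

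For the negative part (small $\theta$) I would exhibit the obstruction living on the ray $j=1$, which is the crux. The key observation is that for $j=1$ one has $a_{i1;N,n}=\max\{i^N,i^{-n}\}=i^N$, \emph{independently of the inductive index} $n$; hence along this ray the estimate degenerates to $i^{\,(1-\theta)K+\theta N-M}\le C$, in which $n$, $k$ and even $m$ have vanished. Negating the small-$\theta$ quantifier, I fix $N$, and for an arbitrary $M$ choose $K>M$; then for every prescribed $\theta_0$ there is $\theta\le\theta_0$ small enough that $a=(1-\theta)K+\theta N-M>0$, whereupon $i^a\to\infty$ forces failure for large $i$, no matter how $n,m,k,C$ are chosen. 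The main difficulty is isolating this off-diagonal ray: it is exactly because the weight $i^N$ there is insensitive to the inductive directions that the estimate reduces to the projective indices alone, which is what produces the asymmetry between large and small $\theta$.
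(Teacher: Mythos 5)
Your argument is correct. For the \emph{big $\theta$} half you follow essentially the same route as the paper: reduce to the evaluation functionals of $\Lambda(\cA)$ (via \cite[Th. 4.3]{BD2}, as recalled before the statement), observe $\|\phi_{ij}\|^*_{N,n}=1/a_{ij;N,n}$, and verify the scalar inequality $a_{ij;K,k}^{1-\theta}a_{ij;N,n}^{\theta}\le C\,a_{ij;M,m}$ with $M=N+1$, $\theta_0$ chosen after $K$, and $k$ chosen last; your logarithmic reformulation on the cone $\{u\ge v\ge 0\}$ is a cleaner way of organizing the paper's two displayed inequalities and, in particular, it makes explicit why the cross terms of the two maxima cause no trouble (the max is attained on the same branch for every pair $(N,n)$), a point the paper leaves implicit. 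Where you genuinely diverge is the \emph{small $\theta$} half: the paper argues structurally, invoking \cite[Props. 5.3(b) \& 5.4(b)]{BD1} together with \cite[Cor. 29.22]{MV} to conclude that the small-$\theta$ estimate would force the complemented subspace $s$ to be a Banach space; you instead negate the quantifiers directly and exhibit the failure on the ray $j=1$, where $a_{i1;N,n}=i^N$ is independent of the inductive index, so that for $K>M$ and $\theta$ small the exponent $(1-\theta)K+\theta N-M$ is positive and $\phi_{i1}$ violates the estimate for large $i$ regardless of $n,m,k,C$. Your version is more elementary and self-contained (no external interpolation-type characterizations needed), at the cost of being tied to the explicit matrix $\cA$; the paper's version, while citation-heavy, exposes the conceptual reason for the failure (presence of a non-Banach Fr\'echet complemented subspace) and generalizes to other $\mathrm{PLS}$-spaces. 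The only cosmetic omission is that you never fix the existentially quantified index $n$ in the positive half (any choice, e.g. $n=1$ as in the paper, works since only the last-chosen $k$ matters there).
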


\bproof
For any $N\in\N_0$ take $M:=N+1$ and for any $K\in\N_0$ take $\theta_0\in(0,1)$ so that
\[(1-\theta_0)K+N\theta_0\leqslant M.\]
In a similar fashion, for $n=1$, any $m\in\N_0$ and $\theta\geqslant\theta_0$ take $k\in\N_0$ so that
\[(1-\theta)k+n\theta\geqslant m.\]
Then (with all the quantifiers in front)
\[\left(\frac{j^K}{i^k}\right)^{1-\theta}\left(\frac{j^N}{i^n}\right)^{\theta}\leqslant C\frac{j^M}{i^m}.\]
Exchanging indices $i,j$ in the above inequality we obtain by \eqref{plb-matrix}
\[(a_{ij;K,k})^{1-\theta}(a_{ij:N,n})^{\theta}\leqslant Ca_{ij;M,m}.\]
Since $\cM\cS\cong\Lambda(\cA)$, the dual interpolation estimate for big $\theta$ follows. If it had the condition for small $\theta$ then, by \cite[Props. 5.3(b) \& 5.4(b)]{BD1} and \cite[Cor. 29.22]{MV}, the space $s$ of rapidly decreasing sequences would be a Banach space -- a contradiction.
\bqed

\begin{Rem}
The above result together with \cite[Prop. 1.1 \& Cor. 1.2(c)]{BD} gives another proof of the fact that the multiplier algebra of the noncommutative Schwartz space is ultrabornological.
\end{Rem}

We end this section with a technical lemma which characterizes when an arbitrary $\mathrm{PLB}$-space is already a $\mathrm{PLS}$-space; the proof uses interpolation theory and follows the idea of \cite[Lemma 7]{KP5}. As a consequence -- see Remark \ref{rem_PLS}(ii), we obtain another proof of the fact that $\cM\cS$ is a $\mathrm{PLS}$-space.

\begin{Lem}
\label{plb-pls}
Let $(X_{M,m})_{M,m\in\N}$ be Banach spaces so that $X:=\opp_{M\in\N}\op_{m\in\N}X_{M,m}$ is a $\mathrm{PLB}$-space. The following conditions are equivalent:
\item[(i)]
$X$ is a $\mathrm{PLS}$-space,
\item[(ii)]
$\forall\,M\,\,\exists\,L:=L_M\,\,\forall\,l\,\,\exists\,m:=m_l\c\,\,X_{L,l}\hk X_{M,m}$\,is a compact inclusion.
\end{Lem}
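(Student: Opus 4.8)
The plan is to prove the two implications separately, with the nontrivial content residing entirely in (ii) $\Rightarrow$ (i). The direction (i) $\Rightarrow$ (ii) should be essentially definitional: if $X$ is a $\mathrm{PLS}$-space then, by definition, it admits \emph{some} projective–inductive representation with compact linking maps. The only work is to transfer this compactness to the \emph{given} representation $(X_{M,m})$. I would do this by a standard ``sandwiching'' argument between the two spectra: using that both representations define the same locally convex topology on $X$, one inserts the abstract compact-linking spectrum between consecutive steps of the given ladder. More precisely, for each $M$ one finds an index $L$ and a level in the compact spectrum through which the inclusion $X_{L,l}\hk X_{M,m}$ factors, so that compactness of the abstract linking map forces compactness of $X_{L,l}\hk X_{M,m}$. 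This yields exactly the quantifier pattern $\forall M\,\exists L\,\forall l\,\exists m$ in (ii).

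For the substantive direction (ii) $\Rightarrow$ (i), I would follow the interpolation-theoretic idea of \cite[Lemma 7]{KP5} announced in the text. The goal is to manufacture, out of a hypothesis giving \emph{one} compact inclusion per pair $(M,m)$, a \emph{cofinal chain} of compact inclusions so that the resulting reduced spectrum is genuinely $\mathrm{PLS}$. The key step is to iterate (ii): fixing $M$, apply the hypothesis to obtain $L_1 = L_M$ with $X_{L_1,l}\hk X_{M,m_l}$ compact for all $l$; then apply it again starting from $L_1$ to obtain $L_2 = L_{L_1}$ with $X_{L_2,\cdot}\hk X_{L_1,\cdot}$ compact, and so on. One then builds an intermediate interpolation space between two consecutive compactly-included Banach spaces in the chain and shows, via the standard fact that a map factoring through a compact operator composed with bounded operators is compact, that the linking maps of the reconstructed spectrum are compact. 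The output is a new representation $X=\opp_M\op_m \widehat X_{M,m}$, cofinal with the original (hence defining the same space and topology), in which every single linking map $\widehat X_{M,m}\hk \widehat X_{M,m+1}$ is compact --- which is precisely the definition of a $\mathrm{PLS}$-space.

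The main obstacle, and where I would spend the most care, is the bookkeeping of the three-fold quantifier alternation in (ii) against the \emph{two}-fold alternation required to exhibit a $\mathrm{PLS}$ structure. The subtlety is that (ii) gives compactness only for inclusions that \emph{jump} both the outer index ($M \to L$) and the inner index ($l \to m$) simultaneously, whereas the definition of a $\mathrm{PLS}$-space demands compact inclusions along the inner direction alone at each fixed outer level. Reconciling these requires choosing the intermediate spaces so that the interpolation inequality controls the norms uniformly, and verifying that the reindexed spectrum remains a genuine projective limit of genuine inductive limits (i.e. that the linking and spectral maps still commute and remain continuous). I expect the compactness-of-composition argument itself to be routine; the delicate part is arranging the indices so that cofinality holds in both directions simultaneously, guaranteeing that the reconstructed spectrum defines the original space $X$ and not a proper subspace or a coarser topology.
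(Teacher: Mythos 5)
Your overall architecture matches the paper's: the implication (i)\,$\Rightarrow$\,(ii) is dismissed as easy, the work is in (ii)\,$\Rightarrow$\,(i), the outer indices are reindexed so that one may assume $L_M=M+1$ and $m_l=l$, and the new inner spectrum at level $M$ is built from interpolation spaces for the couples $\overline{Y_n}=(X_{M+1,n},X_{M,n})$, sandwiched as $X_{M+1,n}\hk \mathcal{J}(\overline{Y_n})\hk X_{M,n}$ so that the resulting $\mathrm{LB}$-spaces $Y_M$ satisfy $X=\opp_M Y_M$. You have also correctly isolated the central difficulty, namely that (ii) only gives compactness for ``diagonal'' inclusions jumping both indices, while a $\mathrm{PLS}$-structure needs compactness along the inner index at a fixed outer level.

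However, the mechanism you offer for resolving that difficulty is a genuine gap. You claim the compactness of the new inner linking maps follows from ``the standard fact that a map factoring through a compact operator composed with bounded operators is compact.'' No such factorization is available: an intermediate space $Z$ with $X_{M+1,n}\hk Z\hk X_{M,n}$ need not receive $X_{M+1,n}$ compactly nor embed compactly into $X_{M,n}$ (take $Z$ equal to either endpoint), so sandwiching alone extracts no compactness from the hypothesis. The actual engine of the proof is the one-sided compactness theorem for the real interpolation method (\cite[Cor.~3.8.2]{BL}): if one inclusion of the couple $\overline{Y_{n+1}}$ is compact, then $\mathcal{J}_{\theta,1}(\overline{Y_{n+1}})\hk\mathcal{J}_{\theta',1}(\overline{Y_{n+1}})$ is compact for $\theta<\theta'$. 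This forces one to run a \emph{strictly increasing} sequence of interpolation parameters $\theta_1<\theta_2<\cdots<1$ along the inner index, defining the linking map as the composition of the interpolated (merely continuous) map $\mathcal{J}_{\theta_n,1}(\overline{Y_n})\to\mathcal{J}_{\theta_n,1}(\overline{Y_{n+1}})$ with the compact parameter-shift inclusion into $\mathcal{J}_{\theta_{n+1},1}(\overline{Y_{n+1}})$; without the drifting parameters the argument does not close. A second, smaller omission: since the new linking maps $j_n^{n+1}$ are in general no longer injective, one must verify that the inductive topology on $Y_M=\bigcup_n j_n^{n+1}(\mathcal{J}_{\theta_n,1}(\overline{Y_n}))$ exists (is Hausdorff); the paper does this by separating points with functionals that are continuous on every step of the spectrum (\cite[Lemma 24.6]{MV}). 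Your remark about ``verifying the reindexed spectrum remains a genuine projective limit of inductive limits'' gestures at this but does not identify the issue.
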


\bproof
We only need to show the implication (ii)$\Rightarrow$(i). There is no loss of generality in assuming that $L_M=M+1$ and $m_l=l$. Consider the commutative diagram
\begin{center}
\begin{tikzpicture}
  \matrix (m) [matrix of math nodes,row sep=3em,column sep=4em,minimum width=2em]
  {
     X_{2,1} & X_{1,1} \\
     X_{2,2} & X_{1,2} \\};
  \path[right hook-stealth]
    (m-1-1) edge node [left] {$\iota_1^2$} (m-2-1)
            edge node [above] {$j_1$} (m-1-2)
    (m-2-1) edge node [above] {$j_2$} (m-2-2)
    (m-1-2) edge node [right] {$\kappa_1^2$} (m-2-2);
\end{tikzpicture}
\end{center}

\noindent
where the inclusions $\iota_1^2,\kappa_1^2$ are the respective linking maps and the inclusions $j_1,j_2$ are compact. Applying the real interpolation method with parameters $\theta_1,1\,(0<\theta_1<1)$
to the Banach couples $\overline{Y_1}:=(X_{2,1},X_{1,1}),\overline{Y_2}:=(X_{2,2},X_{1,2})$ we obtain, by \cite[Th. 3.11.8]{BL}, a continuous map
\[\mathcal{J}_{\theta_1,1}(\overline{Y_1})\to\mathcal{J}_{\theta_1,1}(\overline{Y_2}).\]
between the interpolation spaces $\mathcal{J}_{\theta_1,1}(\overline{Y_1})$ and $\mathcal{J}_{\theta_1,1}(\overline{Y_2})$. By \cite[Cor. 3.8.2]{BL}, we get for $0<\theta_1<\theta_2<1$ the compact inclusion
\[\mathcal{J}_{\theta_1,1}(\overline{Y_2})\hk\mathcal{J}_{\theta_2,1}(\overline{Y_2})\]
therefore the map
\[j_1^2\c\mathcal{J}_{\theta_1,1}(\overline{Y_1})\to\mathcal{J}_{\theta_2,1}(\overline{Y_2})\]
is also compact. We apply the same procedure to the commutative diagram

\begin{center}
\begin{tikzpicture}
  \matrix (m) [matrix of math nodes,row sep=3em,column sep=4em,minimum width=2em]
  {
     X_{2,2} & X_{1,2} \\
     X_{2,3} & X_{1,3} \\};
  \path[right hook-stealth]
    (m-1-1) edge node [left] {$\iota_2^3$} (m-2-1)
            edge node [above] {$j_2$} (m-1-2)
    (m-2-1) edge node [above] {$j_3$} (m-2-2)
    (m-1-2) edge node [right] {$\kappa_2^3$} (m-2-2);
\end{tikzpicture}
\end{center}

\noindent
and obtain a compact operator
\[j_2^3\c\mathcal{J}_{\theta_2,1}(\overline{Y_2})\to\mathcal{J}_{\theta_3,1}(\overline{Y_3}),\]
where $\overline{Y_3}:=(X_{2,3},X_{1,3})$ and $\theta_2<\theta_3<1$. Proceeding this way we obtain a countable inductive system $(j_n^{n+1}\c\mathcal{J}_{\theta_n,1}(\overline{Y_n})\to Y_1)$,
where $Y_1:=\bigcup_nj_n^{n+1}(\mathcal{J}_{\theta_n,1}(\overline{Y_n}))$. Let us observe that the inductive topology of this system exists. Indeed, let $x\in Y_1$ be a non-zero element.
Since $Y_1\subset X_1=\op_nX_{1,n}$, there exists, by \cite[Lemma 24.6]{MV}, a linear functional $\phi\in(\op_nX_{1,n})^*$ such that $\phi(x)\neq0$ and $\phi\circ\kappa_n\in X_{1,n}'$ for all $n\in\N$ (by $(\kappa_n\c X_{1,n}\to X_1)$ we denote the imbedding spectrum of $X_1$). Recall that we distinguish here the space of linear functionals -- denoted by $(\cdot)^*$ and the space of linear and continuous functionals -- denoted by $(\cdot)'$. Therefore $\phi\in Y_1^*$. Moreover, for every $n\in\N$ we have the commutative diagram

\begin{center}
\begin{tikzpicture}
  \matrix (m) [matrix of math nodes,row sep=3em,column sep=3em,minimum width=2em,text depth=0.5ex,text height=2ex]
  {
     \mathcal{J}_{\theta_n,1}(\overline{Y_n}) & X_{1,n} & \K \\
     \mathcal{J}_{\theta_{n+1},1}(\overline{Y_{n+1}}) & X_{1,n+1} & \K \\};
  \path[-stealth]
    (m-1-1) edge node [left] {$j_n^{n+1}$} (m-2-1)
            edge [right hook-stealth] (m-1-2)
    (m-2-1) edge [right hook-stealth] (m-2-2)
    (m-1-2) edge node [above] {$\phi\circ\kappa_n$} (m-1-3)
    (m-2-2) edge node [above] {$\phi\circ\kappa_{n+1}$} (m-2-3)
    (m-1-3) edge node [right] {$\cong$} (m-2-3);
\end{tikzpicture}
\end{center}

\noindent
therefore $\phi\circ j_n^{n+1}\in\mathcal{J}_{\theta_n,1}(\overline{Y_n})'$ for every $n\in\N$. Again, by \cite[Lemma 24.6]{MV}, this implies that the inductive topology of $(j_n^{n+1}\c\mathcal{J}_{\theta_n,1}(\overline{Y_n})\to Y_1)$ exists. Now, by \cite[Lemma 24.34]{MV}, we conclude that $(j_n^{n+1}\c\mathcal{J}_{\theta_n,1}(\overline{Y_n})\to Y_1)$ is an $\mathrm{LB}$-space and compactness of the linking maps $(j_n^{n+1} )_n$ implies
that it is even an $\mathrm{LS}$-space. It follows that we have linear and continuous maps

\begin{equation}
X_2\to Y_1\to X_1.
\label{plb-pls-eq-1}
\end{equation}

\noindent
Indeed, since for every $n\in\N,\,\mathcal{J}_{\theta_n,1}(\overline{Y_n})$ is an interpolation space for the couple $\overline{Y_n}=(X_{2,n},X_{1,n})$ with the compact inclusion
$j_n\c X_{2,n}\hk X_{1,n}$, we get compact inclusions
\[X_{2,n}\hk\mathcal{J}_{\theta_n,1}(\overline{Y_n})\hk X_{1,n}\]
(observe that we loose injectivity in \eqref{plb-pls-eq-1} because the linking maps $(j_n^{n+1} )_n$ are not, in general, injective).
The above argument works for all $\mathrm{LB}$-spaces $X_M$ and $X_{M+1}$ therefore
\[X=\opp_{M\in\N}Y_M\]
where all $Y_M$'s are $\mathrm{LS}$-spaces. Consequently, $X$ is a $\mathrm{PLS}$-space.
\bqed

\begin{Rem}\label{rem_PLS}
Here we give two new proofs of the fact that $\cM\cS$ is a $\mathrm{PLS}$-space.  By Propositions \ref{lp-equal} \& \ref{prop_lslds=LambdaA}, we may use the topological $^*$-algebra isomorphism $\cM\cS\cong\Lambda(\cA)$ (recall that the $\mathrm{PLB}$-matrix $\cA$ is given by \eqref{plb-matrix}).

(i) We do a slight perturbation of $\cA$. Let $\cD:=(d_{ij,N,n})_{i,j\in\N,N,n\in\N_0}$ be a 4-indexed K\"{o}the $\mathrm{PLB}$-matrix given by
\[d_{ij;N,n}:=\max_{i,j\in\N}\bigg\{\frac{j^{N+\frac{1}{n}}}{i^n},\frac{i^{N+\frac{1}{n}}}{j^n}\bigg\}.\]
One can easily show that for all $i,j\in\N,N,n\in\N_0$ we have
\[a_{ij;N,n}\leqslant d_{ij;N,n}\leqslant a_{ij;N+1,n}.\]
This implies the topological isomorphism $\cM\cS\cong\Lambda(\cD)$. Since
\[\frac{d_{ij;N,n}}{d_{ij;N,n+1}}=\min\{i,j\}\max\{i,j\}^{\frac{1}{n(n+1)}},\]
we get that for all $N,n\in\N_0$
\[\lim_{i,j\to+\infty}\frac{d_{ij;N,n+1}}{d_{ij;N,n}}=0.\]
Consequently, $\Lambda(\cD)$, and therefore also $\cM\cS$, is a $\mathrm{PLS}$-space.

(ii) Since $\frac{a_{ij,N,n+2}}{a_{ij,N+2,n}}=(ij)^{-2}$ for all $i,j,N,n$, the inclusion map
\[\ell_2\big((a_{ij,N+2,n})_{i,j\in\N}\big)\hk\ell_2\big((a_{ij,N,n+2})_{i,j\in\N}\big)\]
is compact. By Lemma \ref{plb-pls}, the result follows.
\bqed
\end{Rem}

\section{Algebraic properties of the multiplier algebra}

We say that a subalgebra $B$ of an algebra $A$ is \emph{spectral invariant in $A$} if, for every $x\in B$,
$x$ is invertible in $A$ if and only if it is invertible in $B$. We show that $\cM\cS$ contains a spectral invariant copy of the algebra $s'$, which implies that
it is neither a $\cQ$-algebra nor $m$-convex.

\begin{Prop}\label{prop_Delta}
Let $\cA$ be a K\"{o}the $\mathrm{PLB}$-matrix given by \eqref{plb-matrix} and let $\Delta(\cA)$ be the algebra of all diagonal matrices belonging to $\Lambda(\cA)$. Then
\begin{enumerate}
 \item[\upshape{(i)}] $\Delta(\cA)$ is a complemented subspace of $\Lambda(\cA)$;
 \item[\upshape{(ii)}] $\Delta(\cA)$ is a closed commutative ${}^*$-subalgebra of $\Lambda(\cA)$;
 \item[\upshape{(iii)}] $\Delta(\cA)\cong s'$ as topological ${}^*$-algebras;
 \item[\upshape{(iv)}] $\Delta(\cA)$ is spectral invariant in $\Lambda(\cA)$.
\end{enumerate}
\end{Prop}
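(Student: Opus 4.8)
The whole proposition rests on one computation: on the diagonal the weight matrix degenerates, $a_{jj;N,n}=j^{N-n}$. So for a diagonal matrix $\operatorname{diag}(\xi)$ with $\xi=(\xi_j)_j$, membership in $\Lambda(\cA)$ reads ``$\forall N\,\exists n\colon\sup_j|\xi_j|j^{N-n}<\infty$'', which (use $N=0$ for one direction and $n=N+m$ for the other) is equivalent to ``$\exists m\colon\sup_j|\xi_j|j^{-m}<\infty$'', i.e.\ to $\xi\in s'$, the sup- and $\ell^2$-descriptions of $s'$ agreeing by the argument of Proposition \ref{lp-equal}. Hence the diagonal map $\iota\colon\Delta(\cA)\to s'$, $\operatorname{diag}(\xi)\mapsto\xi$, is a bijection, and it is a $*$-algebra isomorphism because on diagonal matrices multiplication is coordinatewise and the conjugate-transpose involution is complex conjugation. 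The only content of (iii) that then remains is topological.

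For (i) and (ii) I would use the diagonal projection $P\colon\Lambda(\cA)\to\Lambda(\cA)$, $(x_{ij})\mapsto(\delta_{ij}x_{ij})$. Since $P$ does not increase any weighted sup-norm $\|\cdot\|_{N,n,\infty}$, it is continuous on each Banach building block $\ell^\infty(a_{\cdot;N,n})$, hence on each inductive step $\operatorname{ind}_n\ell^\infty(a_{\cdot;N,n})$, hence on the projective limit $\Lambda(\cA)$; as $\im P=\Delta(\cA)$ this proves (i) and exhibits $\Delta(\cA)=\ker(\id-P)$ as closed. The remaining claims of (ii), commutativity and invariance under $*$, are the elementary facts that diagonal matrices commute and that the conjugate transpose of a diagonal matrix is diagonal.

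The main work is the topological half of (iii), and the cleanest route avoids describing the subspace topology of $\Delta(\cA)$ directly (inconvenient, as $\Lambda(\cA)$ is a projective limit of inductive limits). Instead I would prove that $\iota^{-1}\colon s'\to\Lambda(\cA)$, $\xi\mapsto\operatorname{diag}(\xi)$, is continuous and then appeal to the Open Mapping Theorem. Continuity into $\Lambda(\cA)=\operatorname{proj}_N\operatorname{ind}_n\ell^\infty(a_{\cdot;N,n})$ is tested one $N$ at a time: for fixed $N$ and each step $\ell^2(j^{-m})$ of $s'=\operatorname{ind}_m\ell^2(j^{-m})$, the diagonal image lands in $\ell^\infty(a_{\cdot;N,N+m})$ because $a_{jj;N,N+m}=j^{-m}$, and $\ell^2(j^{-m})\hookrightarrow\ell^\infty(j^{-m})$ is norm-decreasing, so $s'\to\operatorname{ind}_n\ell^\infty(a_{\cdot;N,n})$ is continuous for every $N$. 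Thus $\iota^{-1}$ is continuous. As $s'$ is webbed and $\Delta(\cA)$ is ultrabornological --- being complemented by (i) in $\Lambda(\cA)$, which is ultrabornological by Corollary \ref{cor_nuclear_ultraborn_PLS} and Proposition \ref{prop_lslds=LambdaA} --- the continuous linear bijection $\iota^{-1}$ is open by \cite[24.30]{MV}, whence $\iota$ is a topological $*$-isomorphism. The delicate point is exactly the bookkeeping that the projective index $N$ disappears on the diagonal, encoded in the ``$\forall N\,\exists n\Leftrightarrow\exists m$'' equivalence of the first paragraph.

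For (iv) the inverse is forced to stay diagonal. Both algebras are unital with the identity matrix $I\in\Delta(\cA)$ (since $j^{N-n}\leqslant1$ for $n\geqslant N$), and one implication is immediate as $\Delta(\cA)$ is a subalgebra. Conversely, if $x=\operatorname{diag}(x_j)$ is invertible in $\Lambda(\cA)$ with inverse $y=(y_{ij})$, then reading $xy=I$ entrywise gives $x_iy_{ij}=\delta_{ij}$, so $y_{ij}=\delta_{ij}/x_i$; thus $y=\operatorname{diag}(1/x_j)$ is diagonal and, lying in $\Lambda(\cA)$ by hypothesis, belongs to $\Delta(\cA)$. Hence $x$ is already invertible in $\Delta(\cA)$, which is the asserted spectral invariance.
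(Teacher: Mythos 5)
Your proposal is correct, and for parts (i), (ii) and (iv) it is essentially the paper's proof: the same diagonal projection $x\mapsto(\delta_{ij}x_{ij})$ (the paper writes it as $\pi x=\sum_j e_{jj}xe_{jj}$ and checks continuity on the $\ell^1$-blocks $\|\cdot\|_{N,n,1}$ rather than the sup-blocks, which is immaterial by Proposition \ref{lp-equal}), the same identification of the diagonal with $s'$ via $a_{jj;N,n}=j^{N-n}$, and the same entrywise reading of $xy=I$ for (iv). The only genuine divergence is the topological half of (iii). The paper argues directly: it computes $\|\phi\xi\|_{N,n}=\sum_j|\xi_j|j^{N-n}$ and sandwiches these $\ell^1$-type norms between the $\ell^2$-norms of $s'$ via Cauchy--Schwarz, $|\xi|_{-m}\leqslant\sum_j|\xi_j|j^{-m}\leqslant\frac{\pi}{\sqrt{6}}|\xi|_{-m+1}$, so the isomorphism is read off from two-sided estimates on a common fundamental system. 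You instead prove only the easy direction (continuity of $\xi\mapsto\operatorname{diag}(\xi)$ into $\operatorname{proj}_N\operatorname{ind}_n\ell^\infty(a_{\cdot;N,n})$, one $N$ at a time) and recover the other direction from de Wilde's Open Mapping Theorem, using that $s'$ is webbed and that $\Delta(\cA)$ is ultrabornological as a complemented subspace of $\Lambda(\cA)$ (Corollary \ref{cor_nuclear_ultraborn_PLS} together with Proposition \ref{prop_lslds=LambdaA}). Your route is softer and sidesteps the slightly delicate question of describing the subspace topology of $\Delta(\cA)$ inside a projective limit of inductive limits, at the cost of importing the ultrabornologicity machinery; the paper's route is self-contained and quantitative, giving explicit norm equivalences. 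Both are sound, and your reduction of membership to ``$\exists\,m\colon\sup_j|\xi_j|j^{-m}<\infty$'' correctly captures why the projective index $N$ collapses on the diagonal.
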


\begin{proof}
(i) Define $\pi\colon\Lambda(\cA)\to\Lambda(\cA)$
by
\[\pi x:=\sum_{j=1}^{\infty}e_{jj}xe_{jj},\]
where $(e_{ij})_{i,j\in\N}$ is a sequence of matrix units.
Clearly, $\pi$ is a projection.
Note that continuity of a linear operator $T$ on $\Lambda(\cA)$ follows from to the condition
\[\forall N\;\exists M\;\forall m\;\exists n,C>0\;\forall x\in\Lambda(\cA)\quad ||Tx||_{N,n}\leqslant C||x||_{M,m}.\]
But
\[||\pi x||_{N,n}=\sum_{j=1}^\infty|x_{jj}|a_{jj,N,n}\leqslant\sum_{i,j=1}^\infty|x_{ij}|a_{ij,N,n}=||x||_{N,n},\]
so $\pi$ is continuous, and thus $\Delta(\cA)$ is complemented in $\Lambda(\cA)$.

(ii) It is clear that $\Delta(\cA)$ is a commutative ${}^*$-subalgebra of $\Lambda(\cA)$, and by (i) it is closed in $\Lambda(\cA)$.

(iii) Since
\begin{align*}
\Delta(\cA)&=\{x\in\C^{\N\times\N}\colon x_{ij}=0 \text{ for }i\neq j \text{ and }\forall N\;\exists n\quad\sum_{j=1}^\infty|x_{jj}|j^{N-n}<\infty\}\\
&=\{x\in\C^{\N\times\N}\colon x_{ij}=0 \text{ for }i\neq j \text{ and }(x_{jj})_{j\in\N}\in s'\},
\end{align*}
the operator
\[\phi\colon s'\to \Delta(\cA),\,\,\,\,\,\,\,\,\phi\xi:=\sum_{j=1}^{\infty}\xi_je_{jj}\]
is a ${}^*$-algebra isomorphism. Moreover, for all $N,n\in\N_0$ and all $\xi\in s'$, we have
\[||\phi\xi||_{N,n}=\sum_{j=1}^\infty|\xi_j|j^{N-n}.\]
Now, by Cauchy-Schwarz we get
\[|\xi|_{-m}\leqslant\sum_{j=1}^\infty|\xi_j|j^{-m}\leqslant\frac{\pi}{\sqrt{6}}|\xi|_{-m+1}\]
for all $m\in\N_0$. Therefore $\phi$ is an isomorphism of locally convex spaces. Consequently, $\Delta(\cA)\cong s'$ as topological ${}^*$-algebras.

(iv) Let us take $x\in\Delta(\cA)$ which is invertible in $\Lambda(\cA)$ and let $y$ be its inverse. Then
\[
\sum_{k=1}^\infty x_{ik}y_{kj}=x_{ii}y_{ij}= \left\{\begin{array}{ll}
1 & \textrm{for $i=j$},\\
0 & \textrm{otherwise}.
\end{array} \right.
\]
Consequently, $x_{ii}\neq0$, and thus $y_{ij}=0$ for $i\neq j$. This shows that $y\in\Delta(\cA)$, and the proof is complete.
\end{proof}

\begin{Prop}
The following statements hold:
\begin{enumerate}
 \item[\upshape(i)] $\cM\cS$ is not a $\cQ$-algebra;
 \item[\upshape(ii)] $\cM\cS$ is not $m$-convex.
\end{enumerate}
\end{Prop}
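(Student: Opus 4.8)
The plan is to extract both non-properties from Proposition \ref{prop_Delta}, which places inside $\cM\cS\cong\Lambda(\cA)$ a closed, unital, spectral invariant $^*$-subalgebra $\Delta(\cA)\cong s'$. The two assertions will then follow from two different elementary obstructions already visible in $s'$: an element of unbounded spectrum for (i), and failure of local $m$-convexity for (ii). I use freely that $\cM\cS\cong\Lambda(\cA)$ as topological $^*$-algebras, so all the invariants below may be computed in $\Lambda(\cA)$.

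For (i), I would first recall that a $\cQ$-algebra has bounded spectra: if the group $G(\cM\cS)$ of invertibles is open, choose a balanced neighbourhood $U$ of $0$ with $\mathbf 1+U\subseteq G(\cM\cS)$; as $U$ is absorbing, for fixed $x$ and all large $|\lambda|$ one has $-\lambda^{-1}x\in U$, so $\lambda\mathbf 1-x=\lambda(\mathbf 1-\lambda^{-1}x)$ is invertible and $\sigma_{\cM\cS}(x)$ is bounded. I then contradict this with a single diagonal element. Let $d\in\Delta(\cA)$ correspond, under the isomorphism $\Delta(\cA)\cong s'$ of Proposition \ref{prop_Delta}(iii), to the sequence $(j)_{j}\in s'$ (which lies in $s'$ since $|(j)_j|_{-n}^2=\sum_j j^{2-2n}<\infty$ for $n\ge2$). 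Computing in the commutative algebra $s'$ with coordinatewise product, $d-\lambda\mathbf 1$ is invertible precisely when $\lambda\notin\N$: for $\lambda\notin\N$ the inverse sequence $\bigl((j-\lambda)^{-1}\bigr)_j$ is bounded, hence in $s'$, while for $\lambda\in\N$ one coordinate vanishes. Thus $\sigma_{\Delta(\cA)}(d)=\N$, and by spectral invariance (Proposition \ref{prop_Delta}(iv)) this equals $\sigma_{\Lambda(\cA)}(d)=\sigma_{\cM\cS}(d)$, which is therefore unbounded, contradicting the property of $\cQ$-algebras above.

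For (ii), the key remark is that $m$-convexity is inherited by subalgebras with the induced topology, so it suffices to show that $s'\cong\Delta(\cA)$ is not $m$-convex. The crux, and the step I expect to be delicate, is to show that every continuous submultiplicative seminorm on $s'$ depends on only finitely many coordinates. Suppose $p$ is such a seminorm. The set $B:=\{j e_j:j\in\N\}\cup\{0\}$ is bounded in $s'$ (indeed $\sup_j|je_j|_{-2}=\sup_j j^{-1}=1$), hence $C:=\sup_j p(je_j)<\infty$, and by homogeneity $p(e_j)\le C/j$. Each $e_j$ is idempotent, so $p(e_j)\in\{0\}\cup[1,\infty)$, forcing $p(e_j)=0$ for $j>C$. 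Since $\sum_{C<j\le M}e_j\to \mathbf 1-\sum_{j\le C}e_j$ in $s'$ as $M\to\infty$, continuity gives $p\bigl(\mathbf 1-\sum_{j\le C}e_j\bigr)=0$, and submultiplicativity yields $p(x)=p\bigl(x\sum_{j\le C}e_j\bigr)$ for all $x\in s'$; thus $p$ is a function of $x_1,\dots,x_{\lfloor C\rfloor}$ alone.

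Consequently any $m$-convex topology on $s'$ would be coarser than the topology of coordinatewise convergence. But the latter is strictly coarser than the genuine $s'$-topology: for example $(j^{\,j}e_j)_j\to 0$ coordinatewise, while $|j^{\,j}e_j|_{-n}=j^{\,j-n}\to\infty$ for every $n$, so this sequence does not converge to $0$ in $s'$. Hence no family of submultiplicative seminorms can generate the topology of $s'$, so $s'$ is not $m$-convex, and by the subalgebra remark neither is $\cM\cS$. I expect the ``localisation to finitely many coordinates'' of submultiplicative seminorms to be the main obstacle, the remaining steps being bookkeeping resting on Proposition \ref{prop_Delta}.
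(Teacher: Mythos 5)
Your reduction is the same as the paper's: both proofs pass through Proposition \ref{prop_Delta} to the closed, spectral invariant copy of $s'$ sitting inside $\cM\cS$ as the diagonal, and both use the same inheritance arguments (for (ii), intersecting an idempotent neighbourhood basis with the subalgebra). Where you diverge is at the final step: the paper simply cites \cite[Th.\ 2.8]{BonDom} for the fact that $s'$ is neither a $\cQ$-algebra nor $m$-convex, whereas you open that black box and prove both failures directly --- for (i) by exhibiting the element $(j)_j\in s'$ with spectrum $\N$ (the paper instead argues that openness of the invertibles would transfer to $s'$, so it never needs the bounded-spectrum lemma), and for (ii) by the localisation argument showing every continuous submultiplicative seminorm on $s'$ factors through finitely many coordinates. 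Both of your ad hoc arguments are correct: the idempotent trick $p(e_j)\in\{0\}\cup[1,\infty)$ combined with $p(e_j)\leqslant C/j$ is exactly the right mechanism, and the spectral computation is immediate from spectral invariance. Two small points worth tightening: your chain only gives the inequality $p(x)\leqslant p\bigl(x\sum_{j\leqslant C}e_j\bigr)$ rather than equality, but that inequality already shows $p$ vanishes on sequences supported in $\{j>C\}$ and hence depends only on the first $\lfloor C\rfloor$ coordinates, which is all you use; and the claim that $(j^{\,j}e_j)_j$ does not converge in $s'$ tacitly uses that $s'$ is a regular $\mathrm{LB}$-space (a DFS-space), so that convergent sequences are bounded in some step $s_{-n}$ --- true, but it deserves a word. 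The trade-off is clear: the paper's proof is shorter but rests on an external theorem; yours is self-contained and, as a by-product, records the concretely useful facts that $\cM\cS$ contains elements of unbounded spectrum and that submultiplicative seminorms on $s'$ are necessarily finite-dimensional in nature.
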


\bproof
By Proposition \ref{prop_Delta}, $\mathcal{ML}(s',s)$ contains a closed, spectral invariant ${}^*$-subalgebra $M$ isomorphic to $s'$.

(i) Let $F,G$ be the sets of invertible elements in $M$ and $\mathcal{ML}(s',s)$, respectively.
Then $F=G\cap M$. This shows that $G$ is not open, because otherwise $F$ would be open, which contradicts \cite[Th. 2.8]{BonDom}. Hence $\mathcal{ML}(s',s)$ is not a $\cQ$-algebra.

(ii) Suppose that there is a basis $\cV$ of zero neighborhoods in $\mathcal{ML}(s',s)$ such that $V^2\subseteq V$ for all $V\in\cV$.
Then $\{V\cap M\}_{V\in\cV}$ is a basis of zero neighborhoods in $M$ and
\[(V\cap M)^2\subseteq V^2\cap M\subseteq V\cap M,\]
so $M$ is $m$-convex, a contradiction (again apply \cite[Th. 2.8]{BonDom}). Hence, $\mathcal{ML}(s',s)$ is not $m$-convex.
\bqed

\printbibliography


%
%
%
%
%
%
%
%
%
%

\vspace{2cm}

\begin{minipage}{0.7\textwidth}
Tomasz Cia\'s

Faculty of Mathematics and Computer Science

Adam Mickiewicz University, Pozna{\'n}

ul. Uniwersytetu Pozna{\'n}skiego 4

61-614 Pozna{\'n}, Poland

e-mail: tcias@amu.edu.pl
\end{minipage}

\vspace{1cm}

\begin{minipage}{0.7\textwidth}
Krzysztof Piszczek

Faculty of Mathematics and Computer Science

Adam Mickiewicz University, Pozna{\'n}

ul. Uniwersytetu Pozna{\'n}skiego 4

61-614 Pozna{\'n}, Poland

e-mail: kpk@amu.edu.pl
\end{minipage}

\end{document}